\newtheorem{theorem}{Theorem}[section]
\newtheorem{proposition}[theorem]{Proposition}
\newtheorem{lemma}[theorem]{Lemma}
\newtheorem{definition}[theorem]{Definition}
\newtheorem{corollary}[theorem]{Corollary}
\newcommand{\R}{\mathbb R}
\newcommand{\vv}{\langle v\rangle}
\newcommand{\dd}{\, \mathrm{d}}
\newcommand{\tr}{\mbox{tr}}
\numberwithin{equation}{section}
\title[A continuation criterion for the Landau equation]{A continuation criterion for the Landau equation with very soft and Coulomb potentials}
\author[Stanley Snelson]{Stanley Snelson}
\address{Department of Mathematics and Systems Engineering, Florida Institute of Technology, Melbourne, FL 32901}
\email{ssnelson@fit.edu}
\author{Caleb Solomon}
\address{Department of Mathematics and Systems Engineering, Florida Institute of Technology, Melbourne, FL 32901}
\email{csolomon2017@my.fit.edu}
\thanks{SS would like to thank Chris Henderson and Will Golding for interesting discussions about this work.}
\thanks{SS was partially supported by NSF grants DMS-2213407 and DMS-2511236.}
\begin{document}

\maketitle

\begin{abstract}
We consider the spatially inhomogeneous Landau equation in the case of very soft and Coulomb potentials, $\gamma \in [-3,-2]$. We show that solutions can be continued as long as the following three quantities remain finite, uniformly in $t$ and $x$: (1) the mass density, (2) the velocity moment of order $s$ for any small $s>0$, and (3) the $L^p_v$ norm for any $p>3/(5+\gamma)$. In particular, we do not require a bound on the energy density. 
\end{abstract}

\section{Introduction}

We consider the Landau equation, a collisional kinetic model from plasma physics. The unknown function $f(t,x,v)\geq 0$ models the density of particles at time $t\geq 0$, location $x\in \R^3$, and velocity $v\in \R^3$. The equation reads
\begin{equation}\label{e:landau}
\partial_t f + v\cdot\nabla_x f = Q(f,f),
\end{equation}
where $Q$ is the bilinear Landau collision operator, defined for functions $f,g:\R^3\to\R$ by
\begin{equation}\label{e:collision}
Q(f,g) =  \nabla_v \cdot \left(\int_{\R^3} a(v-w) [f(w) \nabla_v g(v) - f(v)\nabla_w g(w)] \dd w\right),
 \end{equation}
Here, the matrix $a$ is defined by
\[
a(z) = a_\gamma |z|^{\gamma+2} \left( I - \frac{z\otimes z}{|z|^2}\right), \quad z\in \R^3,
\]
for some $\gamma \in [-3,1]$, and $a_\gamma >0$ is a constant depending on $\gamma$. 

This article is concerned with the case $\gamma \in [-3, -2]$, which is known as {\it very soft potentials}. This case is the most difficult to analyze mathematically, because the singularity of order $\gamma+2$ in $a(z)$ is the most severe. Included in our analysis is the case $\gamma = -3$ ({\it Coulomb potentials}), which is the most physically relevant case as a model of plasmas.

We are concerned with the {\it large-data} regime, where $f$ and the initial data are not assumed to be close to an equilibrium state. The equilibrium states for \eqref{e:landau} are known as {\it Maxwellians} and take the form  $c_1 e^{-c_2 |v-v_0|^2}$ for $c_1, c_2>0$ and $v_0\in \R^3$. It has been known since the work of Guo \cite{guo2002periodic} in 2002 that a global solution exists if the initial data is sufficiently close to a Maxwellian. (See \cite{carrapatoso2016cauchy, CM2017verysoft, kim2020landau, duan2019mild, guo2020landau, golding2023global} and the references therein  for futher results on the close-to-equilibrium regime, and \cite{luk2019vacuum, chaturvedi2020vacuum} for global solutions close to the vacuum state $f\equiv 0$.)
 
By contrast, global existence of classical solutions in the large-data regime is a difficult unsolved problem. In recent years, there has been partial progress 
in the form of conditional regularity results and continuation criteria, see e.g. \cite{golse2016, cameron2017landau, HST2018landau, HST2019rough}. To discuss these results, let us define for any solution $f$ the densities
 \begin{align*}
 M_f(t,x) = \int_{\R^3} f(t,x,v) \dd v, \qquad &\text{(mass density)}\\
 E_f(t,x) = \int_{\R^3} |v|^2 f(t,x,v) \dd v, \qquad&\text{(energy density)}.
 \end{align*}
 The best continuation criterion currently available seems to be \cite[Theorem 1.3]{HST2019rough}, which says that solutions can be continued for as long as the following quantity remains finite:
 \begin{equation}\label{e:old-condition}
 \begin{cases} 
 \displaystyle\sup_{t\in [0,T],x\in \R^3} [M_f(t,x) + E_f(t,x)], & \text{ if } \gamma \in (-2,0),\\
 \displaystyle\sup_{t\in [0,T],x\in \R^3} [M_f(t,x) + \|f(t,x,\cdot)\|_{L^q_v(\R^3)}], & \text{ if } \gamma \in [-3,-2],
 \end{cases}
 \end{equation}
 where 
 \[
\begin{cases} 
q > \dfrac 3 {3+\gamma}, &\gamma\in (-3,-2],\\
q = \infty, &\gamma = -3.
\end{cases}
\] 
 The goal of this paper is to improve the continuation criterion \eqref{e:old-condition} in the case $\gamma \in [-3,-2]$.


 \subsection{Main results}
 
Let us define the notion of solution that we use throughout the paper:
 
 \begin{definition}\label{d:solution}
 For any $T_1 < T_2$, we say that $f:[T_1,T_2]\times \R^6\to [0,\infty)$ is a classical solution of the Landau equation if $f \in (C^1_{t,x} C^2_v)_{\rm loc}([T_1,T_2]\times \R^6)$, $\int_{\R^3} f(t,x,v) \dd v \in L^\infty_{\rm loc}([T_1,T_2]\times\R^6)$, and $f$ satisfies \eqref{e:landau} pointwise for all $(t,x,v) \in (T_1,T_2)\times \R^6$. 
 \end{definition}
 Note that the condition on $\int_{\R^3} f(t,x,v) \dd v$ in Definition \ref{d:solution} ensures $f$ has enough decay in $v$ so that the integral in \eqref{e:collision} converges. This definition implies in particular that the initial condition $f(0,x,v)$ is in $(C^1_{x} C^2_v)_{\rm loc}(\R^6)$. When we discuss solutions in a smaller set, such as the cylinder $(-1,0]\times B_1 \times \R^3$, it is understood that this is the restriction to the smaller set of a solution as in Definition \ref{d:solution} on some $[T_1,T_2]\times\R^6$.
 
Our first main results are about upper bounds in $L^\infty$ that depend only on the initial data and on relatively weak conditional bounds on $f$. There are two versions of the result: first, we have a bound with a strong decay assumption at time zero, but a weaker conditional assumption on $f$ for positive time:
 
 \begin{theorem}\label{t:upper}
Let $\gamma \in [-3,-2]$, and let $f\geq 0$ be a classical solution to the Landau equation on $[0,T]\times\R^6$, as in Definition \ref{d:solution}, for some $T>0$. Assume that the initial data $f_{\rm in}(x,v) = f(0,x,v)$ satisfies the lower bound
 \begin{equation}\label{e:fin-lower}
 f_{\rm in}(x,v) \geq \ell, \quad x\in \R^3, v\in B_\rho(0),
 \end{equation}
 for some $\ell, \rho>0$, as well as the upper bound
 \begin{equation}\label{e:fin-upper}
 f_{\rm in} \leq C_0 e^{-\mu |v|^2},
	 \end{equation}
 for some $C_0, \mu >0$. Furthermore, assume that $f$ satisfies the upper bounds
 \begin{equation}\label{e:cond}
 M_f(t,x)\leq M_0, \quad \int_{\R^3} |v|^s f(t,x,v)\dd v \leq S_0, \quad \|f(t,x,\cdot)\|_{L^{p+\delta}(\R^3)} \leq P_0,  
 \end{equation}
uniformly in $x\in \R^3$ and $t\in [0,T]$, for some $s \in (0,2)$ and $\delta>0$, where $p= \dfrac 3 {5+\gamma}$.
 
 Then $f$ satisfies a global upper bound 
 \[
 f(t,x,v) \leq C,
 \]
 for some $C>0$ depending on $\gamma$, $\ell$, $\rho$, $C_0$, $\mu$, $s$, $\delta$, $T$, and the constants in \eqref{e:cond}.
 \end{theorem}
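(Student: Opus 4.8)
The plan is to run a De Giorgi--Nash--Moser iteration on the level sets of $f$, adapted to the hypoelliptic structure of \eqref{e:landau} in the spirit of \cite{golse2016, cameron2017landau, HST2019rough}. Write the collision operator \eqref{e:collision} in the non-divergence form $Q(f,f) = \tr(\bar a[f] D_v^2 f) + \bar c[f] f$, where $\bar a^{ij}[f] = a^{ij}*f$, $\bar b^i[f] = (\partial_j a^{ij})*f$ and $\bar c[f] = (\partial_i\partial_j a^{ij})*f$, so that $\bar b^i = \partial_j\bar a^{ij}$ and $\bar c = \partial_i\bar b^i$. Two coefficient bounds are used throughout. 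First, the ellipticity $\bar a[f](v)\xi\cdot\xi \gtrsim \vv^{\gamma}|\xi|^2$, which follows from a uniform-in-$(t,x)$ positive lower bound on the mass density in a fixed velocity ball, obtained by propagating the initial lower bound \eqref{e:fin-lower} along the flow by a spreading-of-mass argument (as in \cite{HST2018landau, HST2019rough}) that uses only $M_f\leq M_0$ and a weak moment, not the energy. Second, $\tr\bar a[f](v)\lesssim M_0+P_0$: splitting $\int|v-w|^{\gamma+2}f(w)\dd w$ into $\{|v-w|<1\}$ and $\{|v-w|\geq1\}$, the far piece is $\leq M_0$ since $\gamma+2\leq0$, and the near piece is controlled by H\"older's inequality against $\|f\|_{L^{p+\delta}_v}$ precisely because the choice $p=3/(5+\gamma)$ makes $|v-w|^{\gamma+2}$ locally in $L^{(p+\delta)'}$. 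By contrast, the lower-order coefficients $\bar b[f]$ and $\bar c[f]$ are \emph{not} pointwise controlled by $\|f\|_{L^{p+\delta}_v}$ --- the kernels $|v-w|^{\gamma+1}$, $|v-w|^{\gamma}$ are too singular --- which is exactly why the earlier criterion \eqref{e:old-condition} needed the stronger exponent $q>3/(3+\gamma)$.

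The heart of the argument is a local integrability improvement, carried out on kinetic cylinders $Q$ so as to accommodate the transport term $v\cdot\nabla_x f$ (note that, since $p+\delta$ may be below $2$, $f$ is not a priori locally square-integrable in $v$, so this step cannot be skipped). Multiplying \eqref{e:landau} by $(f-K)_+\phi^2$, or by $(f-K)_+^{r-1}\phi^2$, with $\phi$ a kinetic cutoff and letting $f_K=(f-K)_+$, one obtains the energy inequality
\[
\sup_t\!\int f_K^2\phi^2 + \iint \bar a[f]\nabla_v f_K\cdot\nabla_v f_K\,\phi^2 \;\lesssim\; \iint\!\big(|\partial_t\phi^2|+|v\cdot\nabla_x\phi^2|+(M_0{+}P_0)|D_v^2\phi^2|\big)f_K^2 \;+\; \mathcal R_K ,
\]
where $\mathcal R_K$ collects the contributions of $\bar b[f]$ and $\bar c[f]$ (the velocity dissipation on the left satisfies $\iint\bar a[f]\nabla_v f_K\cdot\nabla_v f_K\,\phi^2 \gtrsim \iint \vv^{\gamma}|\nabla_v f_K|^2\phi^2$). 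The term $\mathcal R_K$ is \emph{never} estimated through pointwise bounds on $\bar b,\bar c$ but through the genuinely bilinear quantities $\bar b\cdot\nabla_v f_K\,f_K$ and $\bar c\,f\,f_K$ directly: applying Hardy--Littlewood--Sobolev with $|v-w|^{\gamma}$ (resp.\ $|v-w|^{\gamma+1}$) viewed as a Riesz kernel of positive order $3+\gamma$ (resp.\ $4+\gamma$), one pairs $\|f\|_{L^{p+\delta}_v}$ against integrability supplied partly by the dissipation (via $\dot H^1_v\hookrightarrow L^6_v$) and partly by the smallness of $|\{f>K\}|$, absorbing a small fraction of the dissipation back on the left. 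The large-velocity region, where $\bar a[f]\sim\vv^\gamma$ is weakest, is handled with the small moment $\int\vv^s f\leq S_0$ together with a decay estimate in $v$ propagated from \eqref{e:fin-upper} by comparison with a slowly-varying Gaussian barrier, in the manner of \cite{cameron2017landau}. (When $\gamma=-3$, the kernel in $\bar c[f]$ degenerates to a point mass, so $\bar c[f]=cf$ and the reaction term is genuinely quadratic, which fits the same scheme.)

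Feeding the energy inequality into the hypoelliptic gain-of-integrability lemma --- solutions of $\partial_t g+v\cdot\nabla_x g=\nabla_v\cdot F+h$ with $g\in L^2_{t,x}H^1_v$ and $F,h$ suitably controlled gain an $L^{2+\kappa}$ bound in all variables (see \cite{golse2016}) --- produces a nonlinear recursion of De Giorgi type for $\|f_{K_n}\|_{L^2(Q_n)}$ on a shrinking sequence of kinetic cylinders $Q_n\downarrow Q_\infty$ with levels $K_n\uparrow K_\infty$. First this upgrades $f$ to arbitrarily high local integrability (rendering $\bar b[f],\bar c[f]$ effectively harmless), and then, restarting the iteration, the standard De Giorgi lemma forces $f\leq K_\infty$ on $Q_\infty$ once $K_\infty$ is chosen large in terms of $M_0,S_0,P_0$, the cylinder, $T$, and the constants above. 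Covering $[\tau,T]\times\R^6$ by such cylinders for each $\tau\in(0,T)$, and treating the slab $t\in[0,\tau]$ directly via \eqref{e:fin-upper} and the barrier of the previous step, yields the claimed global bound $f\leq C$.

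The main obstacle is the treatment of the drift and reaction terms in $\mathcal R_K$ under the \emph{critically weak} hypothesis $p=3/(5+\gamma)$: the coefficients $\bar b[f],\bar c[f]$ cannot be bounded pointwise, so the naive energy estimate does not close, and one is forced to estimate $\bar b\cdot\nabla_v f_K\,f_K$ and $\bar c\,f\,f_K$ via Hardy--Littlewood--Sobolev, supplying the missing integrability partly from $\|f\|_{L^{p+\delta}_v}$ and partly from the velocity dissipation itself (hence the delicate absorption of part of the dissipation), while the absence of any control on the energy density --- the whole point of the theorem --- forces reliance on the weaker substitutes $\int\vv^s f\leq S_0$ and the propagated Gaussian barrier for the large-velocity contributions. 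Verifying that the exponents in these H\"older--HLS--Sobolev chains are compatible, and in particular that the gains beat the losses exactly at the threshold $p=3/(5+\gamma)$ with the small parameters $s>0$ and $\delta>0$ providing just enough slack, is the technical core of the proof.
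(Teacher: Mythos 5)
Your proposal takes a genuinely different route from the paper, and the difference is worth spelling out. You set up a De Giorgi level-set iteration and propose to close the drift and reaction terms $\bar b^f\cdot\nabla_v f_K\,f_K$ and $\bar c^f f f_K$ by Hardy--Littlewood--Sobolev, never bounding $\bar b^f$, $\bar c^f$ pointwise; the integrability threshold $p=3/(5+\gamma)$ is supposed to emerge from the HLS/Sobolev arithmetic (pairing $\|f\|_{L^{p+\delta}_v}$ against the dissipation via $\dot H^1_v\hookrightarrow L^6_v$), exactly as in the homogeneous literature. The paper does something almost opposite: it runs a Moser iteration on powers $f^q$, \emph{does} bound $\bar b^f$, $\bar c^f$ pointwise --- but in terms of $\|f\|_{L^\infty}$ with explicitly sublinear exponents (Lemma \ref{l:coeffs}, $|\bar b^f|\lesssim\|f\|_{L^\infty}^{1-p(\gamma+4)/3}$, $|\bar c^f|\lesssim\|f\|_{L^\infty}^{1-p(\gamma+3)/3}$), accepting a polynomial $\|f\|_{L^\infty}$-dependence in the local estimate (Proposition \ref{p:inf}). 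That dependence is then reduced by the scaling $f_r(t,x,v)=r^{5+\gamma}f(t_0+r^2t,\ldots)$ with the specific choice $r\sim\|f\|_{L^\infty}^{-p/3}$ (Theorem \ref{t:inf}), and finally beaten by pointwise Gaussian decay. The two strategies attack the same obstruction (the kernels $|w|^{\gamma+1}$, $|w|^\gamma$ are too singular for $L^{p+\delta}_v$ alone) from opposite ends: you supply the missing integrability nonlinearly from the dissipation; the paper supplies it from $\|f\|_{L^\infty}$ and removes the $L^\infty$ dependence a posteriori.

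Two places in your sketch are, as written, genuine gaps rather than technicalities. First, the HLS/absorption step: you need not just $L^6_v$ from the weighted dissipation but a workable interface with the hypoelliptic gain (Bouchut/\cite{golse2016}), which operates at the much weaker exponent $42/19$ in $(t,x,v)$ jointly; the homogeneous-case arithmetic at $p>3/(5+\gamma)$ does not transfer automatically, and you acknowledge but do not verify that the exponents close. In the reaction term $\iint\bar c^f f f_K\phi^2$ with $\bar c^f\in L^\infty_{t,x}L^{3/2+\eps}_v$ and $f_K\in L^2_{t,x}L^6_v$ the H\"older exponents are exactly critical, so the absorption hinges entirely on the $\delta$- and level-set slack --- this needs a real computation. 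Second, and more importantly, you never explain how the energy density $E_0=\sup_{t,x}\int|v|^2 f$ is removed. The paper's Gaussian barrier (Proposition \ref{p:gaussian}) is a \emph{nonlinear} barrier argument: it uses $f\le h$ inside $\bar a^f$ and $\bar c^f$ to make the decay rate $\mu$ depend on $E_0$ and $\log K$ quantitatively, with $K\sim\|f\|_{L^\infty}$. Theorem \ref{t:inf} then yields $\|f\|_{L^\infty}\lesssim E_0^{-19\gamma/\delta}$, and the loop is closed only by Proposition \ref{p:E}, a delicate interpolation between the $s$-moment and the Gaussian (with an optimized H\"older exponent $q$) that turns a bound of the form $E_0\lesssim E_0^{1-s/4}\cdot(\text{data})$ into an a priori bound on $E_0$. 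Your sketch instead invokes ``a slowly-varying Gaussian barrier, in the manner of \cite{cameron2017landau}''; the linear barrier of \cite{cameron2017landau} does not give the quantitative $(E_0,\|f\|_{L^\infty})$-dependence that makes this bootstrap close, and without an explicit substitute for Proposition \ref{p:E}, your argument as stated still implicitly assumes a bound on the energy density --- which is the hypothesis Theorem \ref{t:upper} is designed to dispense with.
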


 \begin{theorem}\label{t:other-upper}
  Let $\gamma\in [-3,-2]$, and let $f\geq 0$ be a classical solution to the Landau equation on $[0,T]\times\R^6$ as in Definition \ref{d:solution}, for some $T>0$. Assume that the initial data $f_{\rm in}(x,v) = f(0,x,v)$ satisfies the lower bound \eqref{e:fin-lower}, and that $f_{\rm in}$ satisfies the decay $f_{\rm in}(x,v) \leq K \vv^{-q}$ for some $q$ large enough, depending only on $\gamma$.
  
Furthermore, assume that $f$ satisfies the upper bounds
\begin{equation}\label{e:other-cond}
 M_f(t,x)\leq M_0,  \quad \|\langle \cdot \rangle^{-19\gamma/(p+\delta)} f(t,x,\cdot)\|_{L^{p+\delta}(\R^3)} \leq P_0, 
\end{equation}
uniformly in $x\in \R^3$ and $t\in [0,T]$, with $p$ and $\delta$ as in Theorem \ref{t:upper}. 

Then $f$ satisfies a global upper bound
\[
f(t,x,v) \leq C,
\]
for some $C>0$ depending on $\gamma$, $\ell$, $\rho$, $\delta$, $T$, and the constants in \eqref{e:other-cond}.
\end{theorem}

As mentioned earlier, the benefit of Theorem \ref{t:other-upper} is that it requires less decay for the initial condition, at the cost of a stronger assumption of decay for positive times.

On a more technical note, the assumption of polymomial decay at time zero is not used quantitatively in Theorem \ref{t:other-upper}, and is only required in order to apply results from the literature about propagating lower bounds forward in time. By contrast, the Gaussian decay assumption \eqref{e:fin-upper} plays an important quantitative role in Theorem \ref{t:upper}.

When combined with the results of \cite{henderson2017smoothing}, our Theorems \ref{t:upper} and \ref{t:other-upper} imply that $f$ satisfies regularity estimates of all orders on $[t,T]\times\R^6$ for any $t>0$, with constants depending only on $t$, $T$, the initial data, and the constants in \eqref{e:cond}.  Furthermore, the continuation criterion \eqref{e:old-condition} from \cite[Theorem 1.3]{HST2019rough} applies to $f$, because of our assumptions on $f_{\rm in}$. Therefore, bounding the $L^p_v$ norm in \eqref{e:old-condition} with Theorem \ref{t:upper}, we immediately obtain:
 
 \begin{corollary}\label{c:cont}
Let $\gamma \in [-3,-2]$, and let $f$ be a classical solution to the Landau equation as in Definition \ref{d:solution}, with $f_{\rm in}$ satisfying the hypotheses \eqref{e:fin-lower} and \eqref{e:fin-upper} from Theorem \ref{t:upper}. 

If $T_*< \infty$ is the maximal time of existence of the solution $f$, i.e. if $f$ cannot be extended to a solution on $[0,T_*+\tau)\times\R^6$ for any $\tau>0$, then one of the inequalities in \eqref{e:cond} must degenerate as $t\nearrow T_*$, i.e. either
 \[
\sup_{x\in\R^3} M_f(t,x) \nearrow +\infty \,\, \text{ or } \,\, \sup_{x\in \R^3} \int_{\R^3} |v|^s f(t,x,v) \dd v \nearrow +\infty \,\,\text{ or } \,\, \sup_{x\in\R^3} \|f(t,x,\cdot)\|_{L^{p+\delta}_v(\R^3)}\nearrow +\infty,
\]
as $t\nearrow T_*$. 

If we replace the upper bound assumption \eqref{e:fin-upper} with the weaker polynomial decay estimate $f_{\rm in} (x,v) \leq K \vv^{-q}$, with $q$ as in Theorem \ref{t:other-upper}, then the conclusion is modified as follows: if $T_*$ is the maximal time of existence of $f$, then one of the inequalities in \eqref{e:other-cond} must degenerate as $t\nearrow T_*$.

 \end{corollary}

Unlike $q$ in \eqref{e:old-condition}, the critical exponent $p= \dfrac 3 {5+\gamma}$ in Theorem \ref{t:upper}, Theorem \ref{t:other-upper}, and Corollary \ref{c:cont} does not approach $+\infty$ as $\gamma \searrow -3$.
 
We should note that our lower bound condition \eqref{e:fin-lower} on the initial data could be relaxed to allow the presence of vacuum regions, by applying the positivity-spreading result of \cite[Theorem 1.3]{HST2018landau}. For the sake of a simple statement of our results, we focus instead on the continuation of solutions with nice (but large) initial data.

 \subsection{Comparison with homogeneous Landau}
 
 It is interesting to compare these results to what is known for the spatially homogeneous Landau equation, which arises from assuming the solution of \eqref{e:landau} is constant in $x$. Then $f(t,v)$ satisfies
 \begin{equation}\label{e:homogeneous}
 \partial_t f = Q(f,f).
 \end{equation}
Compared to the full inhomogeneous Landau equation, more results about existence and regularity are available for \eqref{e:homogeneous}, see \cite{fournier2010uniqueness, desvillettes2015landau, silvestre2015landau, gualdani2017landau, CDE2017landau, carillo2020gradient, GGIV2022landau, golse2022local, desvillettes2023mono} and the references therein.
In particular, this equation is known to be globally well-posed  for any $\gamma\in [-3,1]$ : see \cite{villani1998landau, desvillettes2000landau, alexandre2015apriori, Wu2014global} and \cite{guillen2023landau} which showed existence for the last remaining case, $\gamma \in [-3,-2)$, by proving that the Fisher information $\int_{\R^3} |\nabla_v f|^2 /f \dd v$ is nonincreasing. The $L^{3/(5+\gamma)}$ norm, which is a borderline in the results of the current article about inhomogeneous Landau, is in some sense a critical norm for the Landau equation, and appears often in the study of homogeneous Landau. Prior to the breakthrough of \cite{guillen2023landau}, the norm $L^\infty_t L^q_v([0,T]\times \R^3)$ was the best known continuation criterion for \eqref{e:homogeneous}  \cite{gualdani2017landau, ABDL, golding2023local}. A related borderline appears in the recent work \cite{ggl-arma}, which (working in the Coulomb case $\gamma = -3$) enlarged the space of allowable initial data for \eqref{e:homogeneous} to a weighted $L^{3/2}$ norm, while recovering global existence using some ideas from \cite{guillen2023landau}. For local-in-time existence, $L^{3/2}_v$ seems to be the weakest possible space with current techniques, even in the space homogeneous case.



 Note that $q>3/(5+\gamma)$ is the minimal condition required so that  $\|f\|_{L^q_v}$ controls the convolution $f\ast |v|^{\gamma+2}$ (see \eqref{e:collision} and \eqref{e:abc}) uniformly from above. This condition also appears as a borderline in the result of \cite{bedrossian2022vpl}, which ruled out some approximately self-similar blowup solutions.

 Recently, Alonso-Bagland-Desvillettes-Lods \cite{ABDL} have derived a Prodi-Serrin-like condition for homogeneous Landau: if a solution $f$ (up to a polynomial weight) lies in $L^r([0,T],L^q(\R^3))$ for some $q>1$ and $r\geq 1$ satisfying
 \[
 \frac 2 r + \frac 3 q = 5 + \gamma,
 \]
 then $f$ is bounded in $L^\infty$ for positive times and can therefore be continued past time $T$. (Some unweighted Prodi-Serrin conditions were subsequently derived in \cite{golding2023local}.) 
 It would be interesting to derive this kind of condition for the inhomogeneous Landau equation \eqref{e:landau}, using mixed $(t,x,v)$ norms of the form $L^r_t L^q_x L^p_v([0,T]\times\R^3\times\R^3)$ for some $r, q, p$. 
 \subsection{Proof ideas}

The philosophy of our proof is to leverage the diffusive properties of the collision term $Q(f,f)$, while exploiting the {\it nonlinear structure} of this diffusion more fully, compared to some previous works on the large-data case of the Landau equation.

To explain what this means, let us write the bilinear collision operator \eqref{e:collision} in the usual way as a diffusion operator, either in divergence form
\[
Q(f,g) = \nabla_v\cdot(\bar a^f\nabla_v g) + \bar b^f\cdot \nabla_v g + \bar c^f g,
\]
or nondivergence form
\[
Q(f,g) = \tr(\bar a^f D_v^2 g) + \bar c^f g,
\]
where 
\begin{equation}\label{e:abc}
\begin{split}
\bar a^f &= a_\gamma\int_{\R^3} |w|^{\gamma+2}\Pi(w) f(v-w) \dd w,\\
\bar b^f &= b_\gamma \int_{\R^3} |w|^\gamma w f(v-w) \dd w,\\
\bar c^f &= 
\begin{cases} 
c_\gamma \int_{\R^3} |w|^\gamma f(v-w) \dd w, & \gamma > -3,\\
f, & \gamma = -3,
\end{cases}
 \end{split}
 \end{equation}
 where $b_\gamma$ and $c_\gamma >0$ are constants depending on $\gamma$, and $\Pi(w) = \left( I - \dfrac{w\otimes w}{|w|^2}\right)$. 
 
 Our argument proceeds in the following steps:
 
 \begin{enumerate}
 
\item 
 First, prove a local $L^\infty$ estimate for $f$ by a Moser iteration argument that exploits the gain in regularity/integrability provided by velocity averaging. This argument is inspired by Golse-Imbert-Mouhot-Vasseur \cite[Theorem 12]{golse2016}, who considered linear kinetic Fokker-Planck equations of the form 
 \begin{equation}\label{e:linear}
 \partial_t f + v\cdot\nabla_x f = \nabla_v\cdot (A\nabla_v f) + B\cdot \nabla_v f + s
 \end{equation}
  for general coefficients $A$, $B$, $s$, and then applied their estimate to the Landau equation by placing suitable conditions on $f$ so that the coefficients $\bar a^f$, $\bar b^f$, and $\bar c^f$ in \eqref{e:abc} are well-behaved. This works well when $\gamma \geq -2$, but when $\gamma< -2$, a bound for $f$ in $L^q_v$ is needed to control the coefficients, with $q$ as in \eqref{e:old-condition}, and we would like to avoid this assumption. We address this problem by ``remembering'' the coupling between $f$ and the coefficients earlier in the proof, which leads to an estimate whose constant has  a less severe  dependence on higher integrability norms of $f$.\footnote{It turns out to be more convenient to allow this constant to depend on $\|f\|_{L^\infty}$ rather than $\|f\|_{L^q_v}$. The next two steps of the argument will remove the dependence on the $L^\infty$ norm.} 

\item 

Next, we improve the local estimate using scaling techniques. It is well known that if $f$ solves the Landau equation \eqref{e:landau}, then for any $r>0$ and $\alpha \in \R$, the function 
\[
f_r(t,x,v) = r^{\alpha + 3 + \gamma} f(r^\alpha t, r^{1+\alpha} x, r v)
\]
is also a solution. By choosing a convenient value of $\alpha$ and a scale $r$ that depends on the $L^\infty$ norm of $f$, we obtain a pointwise upper bound of the form $f(t,x,v) \leq C\|f\|_{L^\infty}^\beta$ for some $\beta < 1$, which would imply an unconditional $L^\infty$ estimate by taking the supremum over $(t,x,v)$.

This scaling argument should be compared to \cite{cameron2017landau}, which applied rescaling techniques to estimates for the {\it linear} equation \eqref{e:linear}. Again, this worked well only when $\gamma \geq -2$. 

\item Unfortunately, the constant $C$ in the previous step blows up for large $|v|$, so we cannot naively take the supremum over $v$. We get around this problem in two different ways. First, by absorbing the growing term into the conditional $L^{p+\delta}_v$ norm as a weight, we obtain Theorem \ref{t:other-upper}. But in order to prove Theorem \ref{t:upper}, which features an unweighted conditional norm of $f$, we need to use pointwise decay of $f$, which is why we need to assume decay for $f_{\rm in}$ in Theorem \ref{t:upper}. It is already well-established that Gaussian decay in $v$ is propagated forward in time by the Landau equation \cite{cameron2017landau, henderson2017smoothing}, but for our purposes, the key is to obtain {\it quantitative} dependence of these Gaussian upper bounds on the $L^\infty$ norm of $f$ that is as sharp as possible. We accomplish this via a barrier argument with a barrier of the form $h(v) = K e^{-\mu |v|^2}$. Previous barrier arguments such as \cite{cameron2017landau, henderson2017smoothing} derived a contradiction at a first crossing point between $f$ and $h$ by writing
\[
Q(f,f) \leq Q(f,h) = \tr(\bar a^f D_v^2 h) + \bar c^f h,
\]
and bounding $\bar a^f$ and $\bar c^f$ using  some conditional upper bounds for $f$ like \eqref{e:old-condition}. By contrast, in our Proposition \ref{p:gaussian}, we get sharper estimates by also using $f\leq h$ in our bounds for the coefficients $\bar a^f$, $\bar c^f$. This kind of ``nonlinear barrier argument'' has been applied to the Boltzmann equation \cite{silvestre2016boltzmann, imbert2018decay} but is apparently new in the study of the Landau equation.

 \item 
 Finally, to remove the dependence on the energy density bound, we estimate $\int_{\R^3} |v|^2 f\dd v$ from above by interpolating between $\int_{\R^3} |v|^s f \dd v$ and the Gaussian upper bound.  This interpolation requires a non-obvious argument that previously appeared in \cite{S2018hardpotentials}. 

\end{enumerate}

 \subsection{Notation} 
 
We sometimes use the shorthand $z = (t,x,v) \in \R^7$,  as well as the notation $\langle a\rangle = \sqrt{1+|a|^2}$ for any vector or scalar $a$.

In this paper, all solutions to the Landau equation \eqref{e:landau} are understood to be solutions in the sense of Definition \ref{d:solution}. Whenever the time/space domain $A\subset \R^4$ is not specified, it is understood to be $[0,T]\times\R^3$ as in the statement of Theorem \ref{t:upper}.

To state local estimates, it is convenient to use kinetic cylinders of the form
\[
Q_r(z_0) =  \{(t,x,v) \in \R^7: t_0-r^2 < t \leq t_0,  |x-x_0 - (t-t_0) v_0|< r^3, |v-v_0| < r\} ,
\]
where $r>0$. We also write $Q_r = Q_r(0)$.  These cylinders are well-adapted to the scaling and translation symmetries of certain kinetic equations. More specifically, if $u$ solves the linear equation 
\[
\partial_t u + v\cdot \nabla_x u = \Delta_v u
\] 
in $Q_r(z_0)$, then $u(t_0 + r^2 t, x_0 + r^3 x +r^2 t v_0, v_0 + r v)$ solves the same equation in $Q_1$. This makes $Q_r(z_0)$ a natural domain on which to study local estimates for more general linear kinetic equation such as \eqref{e:linear}.
 
 The notation $A\lesssim B$ means $A\leq C B$ for a constant $C>0$ depending only on the quantities stated in the given lemma or theorem. The notation $A \approx B$ means $A\lesssim B$ and $B\lesssim A$.

 \subsection{Outline of the paper} In Section \ref{s:prelim}, we review some bounds on the coefficients $\bar a^f$, $\bar b^f$, and $\bar c^f$, as well as some known results on the spreading of positivity. Section \ref{s:upper} proves a local $L^\infty$ estimate via Moser iteration, Section \ref{s:improved} derives the global $L^\infty$ estimate of Theorem \ref{t:other-upper}, and Section \ref{s:gaussian} establishes quantitative Gaussian upper bounds. Finally, Section \ref{s:energy} explains how to  remove any dependence on the energy density bound in our results.

\section{Preliminaries}\label{s:prelim}

\subsection{Coefficient bounds}

The following lemma gives upper bounds for the coefficients $\bar a^f$, $\bar b^f$, $\bar c^f$, under the assumption that $L^1_v$ and $L^{p+\delta}_v$ norms of $f$ are bounded, where $p = 3/(5+\gamma)$. The proof is standard, but we need to track the dependence on the $L^\infty$ norm of $f$ precisely.

\begin{lemma}\label{l:coeffs}
Let $f:\R^3 \to \R$ belong to the space $L^1(\R^3)\cap L^\infty(\R^3)$. Let $p = \dfrac{3}{5+\gamma}$, and let $\delta>0$ be an arbitrary small number. 

Then the coefficients $\bar a^f$, $\bar b^f$, and $\bar c^f$ defined in \eqref{e:abc} satisfy, for all $v\in \R^3$,
\[
\begin{split}
 |\bar a^f(v)| &\leq C,\\
|\bar b^f(v)| &\leq C \|f\|_{L^\infty(\R^3)}^{1-p(\gamma+4)/3},\\
|\bar c^f(v)| &\leq C \|f\|_{L^\infty(\R^3)}^{1-p(\gamma+3)/3},
 \end{split}
 \]
for a constant $C>0$ depending only on $\gamma$, $\delta$, and the $L^{p+\delta}(\R^3)$ and $L^1(\R^3)$ norms of $f$. 
\end{lemma}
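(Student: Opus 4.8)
The plan is to estimate each of $\bar a^f$, $\bar b^f$, $\bar c^f$ by splitting the convolution integral into a region near the singularity, say $B_1(v)$ (where $|w| = |v - u| \le 1$), and its complement. On the far region the kernels $|w|^{\gamma+2}$, $|w|^{\gamma+1}$, $|w|^\gamma$ are bounded (for $\bar a$ and $\bar b$; for $\bar c$ when $\gamma > -3$ the kernel $|w|^\gamma$ blows up but is integrable near $0$, while at infinity it decays, so the far part is controlled by $\|f\|_{L^1}$), so the far contributions are all bounded by a constant times $\|f\|_{L^1}$. The work is on the singular region $B_1(v)$, and there the strategy is a single application of Hölder's inequality with the conjugate pair $p+\delta$ and its dual exponent $(p+\delta)' = \frac{p+\delta}{p+\delta-1}$, keeping careful track of when the singular kernel is $L^{(p+\delta)'}_{\mathrm{loc}}$: for $\bar c^f$ we need $\gamma \cdot (p+\delta)' > -3$, for $\bar b^f$ we need $(\gamma+1)(p+\delta)' > -3$, and for $\bar a^f$ we need $(\gamma+2)(p+\delta)' > -3$; since $p = 3/(5+\gamma)$ is exactly the exponent making $(\gamma+2) p' = -3$ borderline, these hold for $\delta$ small (strictly, with a small loss measured by $\delta$), which is why the statement is phrased for arbitrary small $\delta$.

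The nonstandard part — and the reason the lemma is stated the way it is — is that we do not simply Hölder the singular region against $\|f\|_{L^{p+\delta}_v}$ directly; instead we first interpolate $f$ on $B_1(v)$ between $L^{p+\delta}$ and $L^\infty$ to extract the explicit power of $\|f\|_{L^\infty}$. Concretely, writing $q = p+\delta$, for any exponent $m \in [q, \infty]$ we have $\|f\|_{L^m(B_1(v))} \le \|f\|_{L^q(B_1(v))}^{q/m}\|f\|_{L^\infty}^{1-q/m} \le \|f\|_{L^q(\R^3)}^{q/m}\|f\|_{L^\infty}^{1-q/m}$. Then for each coefficient we Hölder the singular kernel against $f$ in the pair $(m', m)$ where $m$ is chosen as small as the integrability of the kernel allows — i.e. $m$ is determined by requiring $\gamma m' > -3$ (resp.\ $(\gamma+1)m' > -3$, $(\gamma+2)m' > -3$) — so that $\int_{B_1(v)} |w|^{\gamma m'}\dd w$ (resp.\ the analogues) is a finite constant depending only on $\gamma$. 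For $\bar a^f$ the borderline exponent is $m = 3/(3+(\gamma+2)) \cdot$ something; carrying the arithmetic through, the admissible $m$ for $\bar a^f$ turns out to give $1 - q/m \le 0$ effectively, i.e. no positive power of $\|f\|_{L^\infty}$ is needed and one gets the clean bound $|\bar a^f| \le C$ — consistent with the fact that $\gamma+2 \ge -1 > -3$ always, so $|w|^{\gamma+2}$ is locally integrable and a bare $L^1$-type bound suffices for $\bar a^f$ once one also uses $\|f\|_{L^q}$ on the singular part with $q$ close to $p$. For $\bar b^f$ and $\bar c^f$ the singularities $|w|^{\gamma+1}$ and $|w|^\gamma$ are more severe, the required $m$ is strictly larger than $q$, and the interpolation produces the exponents $1 - p(\gamma+4)/3$ and $1 - p(\gamma+3)/3$ respectively — one checks this by substituting $p = 3/(5+\gamma)$ into $1 - q/m$ with the borderline $m$.

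The main obstacle is purely bookkeeping: getting the exponents $1 - p(\gamma+4)/3$ and $1 - p(\gamma+3)/3$ to come out exactly, and verifying they are nonnegative (so that the bound is not vacuous) and strictly less than $1$ (so that the later Moser-iteration/scaling arguments can absorb them) for all $\gamma \in [-3,-2]$. At $\gamma = -3$ one should double-check the degenerate definition $\bar c^f = f$, for which the claimed bound reads $|\bar c^f| = |f| \le \|f\|_{L^\infty}^{1 - p(\gamma+3)/3} = \|f\|_{L^\infty}^{1-0} = \|f\|_{L^\infty}$, which is trivially true and consistent with taking the limit. One must also confirm that the constant $C$ genuinely depends only on $\gamma$, $\delta$, $\|f\|_{L^1}$, and $\|f\|_{L^{p+\delta}}$ and not on $v$ — which holds because every kernel integral $\int_{B_1(0)} |w|^{\theta}\dd w$ is translation-invariant in $v$, and the far-field bounds use only the global $L^1$ norm.
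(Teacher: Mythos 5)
Your plan is correct in spirit but takes a genuinely different route from the paper's, so it is worth comparing. The paper does not fix the splitting radius at $1$: for $\bar b^f$ and $\bar c^f$ it proves the \emph{optimized} convolution bound
\[
(|v|^\sigma * f)(v) \le C\,\|f\|_{L^\infty}^{1 - p(\sigma+3)/3}\,\|f\|_{L^p}^{p(\sigma+3)/3},
\]
obtained by bounding the near part by $\|f\|_{L^\infty}\int_{|w|<R}|w|^\sigma\,\mathrm{d}w$, the far part by H\"older against $\|f\|_{L^p}$, and choosing $R$ to balance the two; the residual $\|f\|_{L^p}$ is then absorbed into the constant by interpolating $L^p$ between $L^1$ and $L^{p+\delta}$. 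For $\bar a^f$ it uses the analogous optimized bound with $L^1$ and $L^{p+\delta}$ and no $L^\infty$ at all. Your scheme -- fix the radius at $1$, then interpolate $f$ on $B_1(v)$ into an intermediate $L^m$ and H\"older the kernel against $L^{m'}$ -- does reproduce the same exponents of $\|f\|_{L^\infty}$, but only if you choose $m$ at a \emph{specific} value strictly away from the borderline (e.g.\ $m = 3(p+\delta)/(p(\gamma+3))$ for $\bar c^f$, not the borderline $3/(3+\gamma)$); as written, "the borderline $m$" gives $1 - (p+\delta)(\gamma+3)/3$, which is off by a $\delta$-dependent amount. You should also be aware that the fixed-radius split yields an \emph{additive} bound $|\bar c^f| \lesssim \|f\|_{L^1} + C\|f\|_{L^\infty}^{1-p(\gamma+3)/3}$ rather than the purely multiplicative form claimed in the lemma; this is harmless downstream (Theorem \ref{t:inf} assumes $\|f\|_{L^\infty}>1$) but is not literally the stated inequality, whereas the optimized split gives it cleanly. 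Finally, the first paragraph of your proposal is misleading: the conditions $\gamma(p+\delta)'>-3$ and $(\gamma+1)(p+\delta)'>-3$ do \emph{not} hold for small $\delta$ when $\gamma\in[-3,-2]$ -- only the $\bar a^f$ condition $(\gamma+2)(p+\delta)'>-3$ does -- and this is precisely why your second paragraph has to abandon the direct H\"older against $L^{p+\delta}$ for $\bar b^f$ and $\bar c^f$. The pivot to interpolation is the right idea; the first-paragraph claim should simply be dropped.
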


\begin{proof}
The bound for $\bar a^f$ follows from $|(I - |z|^{-2}z\otimes z)| \leq 1$ and the standard convolution estimate
\[
(|v|^{\gamma+2} \ast f)(v) \leq C \|f\|_{L^{p+\delta}(\R^3)}^{-(p+\delta)'(\gamma+2)/3} \|f\|_{L^1(\R^3)}^{1 + (p+\delta)'(\gamma+2)/3}.
\]
where $(p+\delta)' = (p+\delta)/(p+\delta - 1)$. The bounds for $\bar b^f$ and $\bar c^f$ follow from
\[
(|v|^\sigma \ast f)(v) \leq C \|f\|_{L^\infty(\R^3)}^{1-p(\sigma+3)/3} \|f\|_{L^p(\R^3)}^{p(\sigma+3)/3}, \quad \text{for $\sigma< - 3\left(1- \frac 1 p\right)$},
\]
which holds for both $\sigma = \gamma+1$ and $\sigma = \gamma$, since $p< 3/(3+\gamma)$. Note that we can absorb $\|f\|_{L^p(\R^3)}$ into the constant $C$ in the statement of the lemma, by interpolation.
\end{proof}

Next, we have a lower ellipticity bound for the matrix $\bar a^f$:

\begin{lemma}{\cite[Lemma 4.3]{HST2018landau}}\label{l:a-lower}
Let $f:\R^3\to [0,\infty)$ be an integrable function such that
\[
f(v) \geq \ell, \quad v\in B_\rho(0),
\]
for some $\ell, \rho>0$. Then the matrix $\bar a^f$ defined in \eqref{e:abc} satisfies
\begin{equation}\label{e:a-lower}
e\cdot( \bar a^f e) \geq c_a
\begin{cases}
\vv^\gamma, &e \in \mathbb S^2,\\
\vv^{\gamma+2}, &e\cdot v = 0.
\end{cases}
\end{equation}
The constant $c_a>0$ depends only on $\gamma$, $\ell$, and $\rho$. 
\end{lemma}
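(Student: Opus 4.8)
The plan is to discard all of $f$ except the mass supplied by the hypothesis $f\ge\ell$ on $B_\rho(0)$, and then to reduce \eqref{e:a-lower} to an elementary estimate on a ball. Fix a unit vector $e$ and let $P$ denote the orthogonal projection of $\R^3$ onto $e^\perp$ (local notation for this proof only). Since $e\cdot\Pi(w)e = 1-(e\cdot w)^2/|w|^2 = |Pw|^2/|w|^2$, the formula \eqref{e:abc} for $\bar a^f$ gives
\[
e\cdot\bigl(\bar a^f(v)\,e\bigr)=a_\gamma\int_{\R^3}|w|^{\gamma}\,|Pw|^2\,f(v-w)\dd w .
\]
The integrand is non-negative and $f(v-w)\ge\ell$ whenever $w\in B_\rho(v)$, so after the substitution $w=v+y$ it suffices to prove, with constants depending only on $\gamma$ and $\rho$,
\[
I(v,e):=\int_{B_\rho(0)}|v+y|^{\gamma}\,|P(v+y)|^2\dd y\;\gtrsim\;(1+|v|)^{\gamma}\qquad(e\in\mathbb S^2),
\]
together with $I(v,e)\gtrsim(1+|v|)^{\gamma+2}$ when $e\cdot v=0$; the prefactor $a_\gamma\ell$ then accounts for the $\ell$-dependence of $c_a$. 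The key structural observation is $P(v+y)=Pv+Py$ with $Pv,Py\in e^\perp$.

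When $e\cdot v=0$ one has $Pv=v$, so $|P(v+y)|=|v+Py|\ge|v|-\rho\ge|v|/2$ as soon as $|v|\ge2\rho$; since also $|v+y|\le2|v|$ on $B_\rho(0)$, this yields $I(v,e)\gtrsim|v|^{\gamma}\cdot|v|^{2}\cdot\rho^{3}\gtrsim(1+|v|)^{\gamma+2}$. For a general $e$ and $|v|\ge 2\rho$ I would introduce the ``good set''
\[
S:=\bigl\{\,y\in B_\rho(0)\ :\ Pv\cdot y\ge0,\ \ |Py|\ge\rho/8\,\bigr\},
\]
which has measure $\gtrsim\rho^{3}$ uniformly in $v$ and $e$ (it is a half-ball with a thin cylinder around $\R e$ removed). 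On $S$ the cross term is non-negative, so $|P(v+y)|^2=|Pv|^2+2\,Pv\cdot Py+|Py|^2\ge|Py|^2\ge\rho^2/64$, while $|v+y|\approx|v|$; hence $I(v,e)\gtrsim|v|^{\gamma}\cdot\rho^{2}\cdot|S|\gtrsim|v|^{\gamma}\rho^{5}\gtrsim(1+|v|)^{\gamma}$. Finally, for $|v|<2\rho$ both target quantities are comparable to $1$ (with constants depending on $\rho$ and $\gamma$), and one argues on the same set $S$: there $|P(v+y)|^2\ge\rho^2/64$ and $|v+y|\le3\rho$, and $|v+y|^{\gamma}$ is bounded below on a fixed proportion of $S$ --- trivially when $\gamma\le0$, and after excising a small ball around $-v$ when $\gamma>0$ --- so $I(v,e)\gtrsim1$. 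Combining the three cases yields \eqref{e:a-lower}.

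The only point requiring a little thought is the directional degeneracy of $\Pi$: because $\Pi(w)$ annihilates the direction $w$ itself, when $e$ is nearly parallel to $v$ most of the ball $B_\rho(v)$ on which $f$ is bounded below sits near the degenerate direction of the quadratic form $e\cdot\bar a^f e$, and one must extract the estimate from a uniformly positive-measure transverse piece. The set $S$ does exactly this --- the sign constraint $Pv\cdot y\ge0$ being what disposes of the cross term for free --- and the $O(\rho)$ thinness of that transverse piece is precisely why the generic lower bound carries the weaker exponent $\gamma$ rather than $\gamma+2$. Everything else is bookkeeping of constants in terms of $\rho$ and $\gamma$; since the statement is quoted from \cite{HST2018landau}, one could of course also simply cite it.
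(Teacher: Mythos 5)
Your proof is correct. The paper itself does not prove this lemma --- it is quoted verbatim from \cite[Lemma 4.3]{HST2018landau} --- so there is no in-paper argument to compare against, but your reconstruction (discard all of $f$ except the mass on $B_\rho(v)$, rewrite $e\cdot\bar a^f e$ as $a_\gamma\int |w|^\gamma|Pw|^2 f(v-w)\,\mathrm{d}w$, and then handle the directional degeneracy of $\Pi$ by integrating over the half-ball-minus-cylinder set $S$ so that the cross term $2\,Pv\cdot Py$ has a sign and $|Py|$ is bounded below) is the standard argument and is essentially what the cited reference does. The only cosmetic remark is that, since this paper only needs $\gamma\in[-3,-2]$, the $\gamma>0$ caveat in your $|v|<2\rho$ case is moot.
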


\subsection{Pointwise lower bounds}

Lower bounds for the solution $f$ will be combined with Lemma \ref{l:a-lower} to conclude coercivity of the matrix $\bar a^f$, which is essential for the smoothing properties of the equation. 



\begin{lemma}\label{l:lower-bound}
Let $f:[0,T]\times\R^6\to [0,\infty)$ be a solution of the Landau equation, satisfying $M_f(t,x) \leq M_0$ and $\|f(t,x,\cdot)\|_{L^{p+\delta}_v(\R^3)} \leq P_0$, uniformly in $(t,x)$, where $p = \dfrac 3 {5+\gamma}$.  Assume that the initial data is uniformly $\alpha$-H\"older continuous on $\R^6$ for some $\alpha \in (0,1)$, and satisfies the lower bound
\[
f_{\rm in}(x,v) \geq \ell, \quad x \in \R^3, v \in B_\rho(0),
\]
for some $\ell, \rho>0$. Furthermore, assume that the initial data satisfies the decay estimate $f_{\rm in}(x,v) \leq K \vv^{-q}$ for some $q>0$ sufficiently large, depending on $\alpha$ and $\gamma$. 

Then $f$ satisfies lower bounds of the form
\[
f(t,x,v) \geq \ell', \quad x\in \R^3, v\in B_{\rho/2}(0),
\]
where the constant $\ell'>0$ depends only on $\gamma$, $\ell$, $\rho$, $\delta$, $T$, $M_0$, and $P_0$. 
\end{lemma}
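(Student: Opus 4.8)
The plan is to deduce this from the positivity-spreading results of \cite[Theorem 1.3]{HST2018landau} and \cite[Lemma 2.5]{HST2019rough}, so the real work is to check that the two hypotheses $M_f \leq M_0$ and $\|f(t,x,\cdot)\|_{L^{p+\delta}_v(\R^3)} \leq P_0$ supply all the structural information about the collision coefficients that the argument needs. Writing the equation in nondivergence form $\partial_t f + v\cdot\nabla_x f = \tr(\bar a^f D_v^2 f) + \bar c^f f$, I would isolate three facts. First, the upper bound $|\bar a^f| \leq C$: by Lemma \ref{l:coeffs} this follows from $\|f(t,x,\cdot)\|_{L^1_v(\R^3)} \leq M_0$ and $\|f(t,x,\cdot)\|_{L^{p+\delta}_v(\R^3)} \leq P_0$, and, crucially, it does not involve $\|f\|_{L^\infty}$. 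Second, the sign condition $\bar c^f \geq 0$, which is immediate from \eqref{e:abc} since $f \geq 0$. Third --- the only input that uses the solution itself --- the lower ellipticity $\bar a^f(t,x,v) \geq c_a (1+|v|)^\gamma I$, which by Lemma \ref{l:a-lower} holds at any $(t,x)$ where $f(t,x,\cdot) \geq \ell$ on a fixed velocity ball; this is true at $t=0$ on $B_\rho(0)$, uniformly in $x$.

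With these three facts, I would propagate the lower bound forward by a comparison argument in the kinetic geometry: construct an explicit nonnegative subsolution $\underline f$ of the linear equation $\partial_t g + v\cdot\nabla_x g = \tr(\bar a^f D_v^2 g) + \bar c^f g$, localized near $v = 0$ and small $|x - x_0 - tv|$, lying below $f_{\rm in}$ at $t = 0$; since $\bar c^f \underline f \geq 0$ may be dropped, it is enough to arrange $\tr(\bar a^f D_v^2 \underline f) \geq \partial_t \underline f + v\cdot\nabla_x \underline f$, which can be done using only the two-sided bounds on $\bar a^f$ available on the region where the ellipticity holds. The maximum principle then gives $f \geq \underline f$, and iterating finitely many times along a chain of kinetic cylinders $Q_r(z_0)$ reaches every $(t,x,v)$ with $t \in (0,T]$ and $v \in B_{\rho/2}(0)$, producing $f \geq \ell'$ with $\ell'$ depending only on the listed quantities. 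A continuity/bootstrap argument in $t$ guarantees that the ellipticity region never shrinks below $B_{\rho/2}(0)$, so the comparison step is never vacuous. This is exactly the scheme of \cite{HST2018landau, HST2019rough}, and tracking the constants is routine once the coefficient facts above are in place.

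The main obstacle --- and the reason to invoke the "rough" version \cite{HST2019rough} rather than \cite{HST2018landau}, whose hypotheses also included energy and entropy bounds --- is that by Lemma \ref{l:coeffs} the lower-order coefficients $\bar b^f$ and $\bar c^f$ are only controlled in terms of $\|f\|_{L^\infty}$, a quantity we are not entitled to use here (removing exactly such a dependence is the point of the paper). The plan circumvents this by working in nondivergence form, where $\bar b^f$ never appears, and by using only the \emph{sign} of $\bar c^f$ rather than a bound on it; this is the delicate point, and it is precisely what the argument of \cite[Lemma 2.5]{HST2019rough} is designed to accommodate.
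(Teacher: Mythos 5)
The paper itself gives no proof of this lemma; it simply cites \cite[Theorem 1.3]{HST2018landau} and \cite[Lemma 2.5]{HST2019rough} and restates the result in a form tailored to its later use. So there is no proof in the paper to compare against --- your write-up is actually supplying more detail than the paper does. What you sketch is a reconstruction of the cited barrier/bootstrap argument, and the reconstruction is sound. In particular, your central structural observation is the right one: the hypotheses $M_f \leq M_0$ and $\|f\|_{L^{p+\delta}_v} \leq P_0$ give an upper bound on $\bar a^f$ via Lemma \ref{l:coeffs} (with no $L^\infty$ dependence), the lower ellipticity comes from Lemma \ref{l:a-lower}, $\bar b^f$ disappears in nondivergence form, and $\bar c^f \geq 0$ contributes favorably to a lower bound. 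To make the last point airtight, you should construct $\underline f$ as a subsolution of the reduced operator $\tilde L = \partial_t + v\cdot\nabla_x - \tr(\bar a^f D_v^2\,\cdot\,)$ with no zeroth-order term; then $\tilde L f = \bar c^f f \geq 0$ and $\tilde L \underline f \leq 0$ give $\tilde L(f-\underline f)\geq 0$, and the comparison principle for $\tilde L$ applies without ever bounding $\bar c^f$. This is the cleanest way to avoid the $\|f\|_{L^\infty}$ dependence that you correctly flag as the danger.

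The one place you gloss over a genuine subtlety is the ``continuity/bootstrap argument in $t$.'' At $t=0$ you only know $f_{\rm in}\geq\ell$ on $B_\rho(0)$, so the ellipticity from Lemma \ref{l:a-lower} is available only at $t=0$; to run the comparison argument on an interval you must simultaneously propagate both the lower bound and the ellipticity it feeds back into. The standard fix is to argue by continuity on the maximal time $T_1$ up to which, say, $f(t,x,\cdot)\geq \ell/2$ on $B_{3\rho/4}(0)$ uniformly in $x$; on $[0,T_1]$ the ellipticity is uniform, the barrier gives a quantitative (exponentially decaying in $t$) lower bound that persists slightly past $T_1$, and hence $T_1 = T$. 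Spelling this out makes the exponential $T$-dependence of $\ell'$ visible and closes the potential circularity you implicitly acknowledge. With that addition, your proposal is a correct and complete account of what the cited results provide, and it verifies --- as the paper tacitly assumes --- that $M_0$, $P_0$, $\ell$, $\rho$ supply everything the propagation argument needs.
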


The exact statement of this lemma is not in the literature, but can be justified by combining known facts, in the following way: propagation of lower bounds was established in \cite[Theorem 1.3]{HST2018landau} for solutions with initial data lying in a fourth-order Sobolev space. (A more precise statement of the same result can be found in \cite[Lemma 2.5]{HST2019rough}.) Next, the existence theorem \cite[Theorem 1.2]{HST2019rough} constructed a local solution of \eqref{e:landau} given any bounded, measurable initial data with polynomial decay in $v$ and that is strictly positive in some small ball. As pointed out in \cite{HST2019rough} (see the remark after Lemma 2.5 in \cite{HST2019rough}), this constructed solution also satisfies the lower bounds for positive times, i.e. the conclusion of \cite[Lemma 2.5]{HST2019rough}. The {\it a priori} solution in the statement of our Lemma \ref{l:lower-bound} is also entitled to these lower bounds because of the uniqueness theorem \cite[Theorem 1.4]{HST2019rough}. The assumptions of polynomial decay and H\"older continuity for $f_{\rm in}$ in Lemma \ref{l:lower-bound} are only needed in order to apply this uniqueness theorem. 

Note that our classical solutions, as defined in Definition \ref{d:solution} are locally $C^1_{t,x} C^2_v$, which implies in particular that the initial data is pointwise $C^1$ in $x$ and $C^2$ in $v$.   Therefore, Lemma \ref{l:lower-bound} applies to the solutions considered in our main results.

\section{Local $L^\infty$ estimate}\label{s:upper}

In this section, we consider a solution $f$ to the Landau equation on $(-1,0]\times B_1 \times \R^3_v$. We assume that the matrix $\bar a^f$ satisfies some lower ellipticity bound
\[
e\cdot (\bar a^f(t,x,v) e) \geq \lambda, \quad (t,x,v) \in Q_1, e\in \mathbb S^2,
\]
for some $\lambda>0$. Later, we will recenter the estimate around any arbitrary point $(t_0,x_0,v_0)$, and calculate $\lambda$ depending on $v_0$ via Lemma \ref{l:a-lower}.

As discussed in the introduction, the argument of this sectionis a modification of the work in \cite{golse2016}. The estimate we obtain (Proposition \ref{p:inf}) is in a form that is convenient to apply scaling techniques and eventually remove any dependence on $\|f\|_{L^\infty}$. 

\begin{lemma}\label{l:subsolution}
Let $f\geq 0$ be a classical solution of the Landau equation in $(-1,0]\times B_1\times \R^3_v$, and let $\chi(t,x,v)$ be any smooth, compactly supported function in $Q_1$. Then for any $q\geq 1$, one has
\[
(\partial_t + v\cdot \nabla_x) (\chi f^q) \leq \nabla_v \cdot (\bar a^f\nabla_v (\chi f^q)) + H_0 + \nabla_v \cdot H_1,
\]
with 
\[
\begin{split}
H_0 &= f^q [(\partial_t + v\cdot \nabla_x)\chi + \nabla_v\cdot (\bar a^f\nabla_v \chi) - \nabla_v\cdot(\chi \bar b^f) + q \chi \bar c^f],\\
H_1 &= f^q [-2\bar a^f\nabla_v \chi + \chi \bar b^f].
\end{split}
\]
\end{lemma}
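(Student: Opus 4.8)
The plan is to compute $(\partial_t + v\cdot\nabla_x)(\chi f^q)$ directly by the product rule and then substitute the equation \eqref{e:landau} in divergence form, organizing all the terms into the desired diffusion term plus source terms $H_0$ and $\nabla_v\cdot H_1$. First I would write
\[
(\partial_t + v\cdot\nabla_x)(\chi f^q) = f^q(\partial_t + v\cdot\nabla_x)\chi + \chi\,(\partial_t + v\cdot\nabla_x)(f^q),
\]
and then use $(\partial_t + v\cdot\nabla_x)(f^q) = q f^{q-1}(\partial_t + v\cdot\nabla_x)f = q f^{q-1}Q(f,f)$, where $Q(f,f) = \nabla_v\cdot(\bar a^f\nabla_v f) + \bar b^f\cdot\nabla_v f + \bar c^f f$. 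The only place where an inequality (rather than an identity) enters is the chain-rule term for $f^q$: convexity of $r\mapsto r^q$ for $q\geq 1$ combined with $f\geq 0$ gives $q f^{q-1}\nabla_v\cdot(\bar a^f\nabla_v f) \leq \nabla_v\cdot(\bar a^f\nabla_v(f^q)) + $ (something controlled), since differentiating $f^q$ twice produces $q(q-1)f^{q-2}|\nabla_v f|^2$ weighted by $\bar a^f\geq 0$, which has the "wrong" sign and must be dropped. I would track this carefully: $\nabla_v(f^q) = q f^{q-1}\nabla_v f$, so $\nabla_v\cdot(\bar a^f\nabla_v(f^q)) = q f^{q-1}\nabla_v\cdot(\bar a^f\nabla_v f) + q(q-1)f^{q-2}(\nabla_v f)\cdot(\bar a^f\nabla_v f)$, and since $\bar a^f$ is positive semidefinite the last term is $\geq 0$; hence $q f^{q-1}\nabla_v\cdot(\bar a^f\nabla_v f)\leq \nabla_v\cdot(\bar a^f\nabla_v(f^q))$. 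Similarly $q f^{q-1}\bar b^f\cdot\nabla_v f = \bar b^f\cdot\nabla_v(f^q)$ exactly.

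Next I would multiply through by $\chi$ and reorganize so that the principal diffusion term becomes $\nabla_v\cdot(\bar a^f\nabla_v(\chi f^q))$. Expanding $\nabla_v\cdot(\bar a^f\nabla_v(\chi f^q)) = \nabla_v\cdot(\bar a^f(f^q\nabla_v\chi + \chi\nabla_v(f^q))) = \nabla_v\cdot(\bar a^f f^q\nabla_v\chi) + \nabla_v\cdot(\chi\bar a^f\nabla_v(f^q))$, and further $\nabla_v\cdot(\chi\bar a^f\nabla_v(f^q)) = \chi\nabla_v\cdot(\bar a^f\nabla_v(f^q)) + (\nabla_v\chi)\cdot(\bar a^f\nabla_v(f^q))$. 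Solving for $\chi\nabla_v\cdot(\bar a^f\nabla_v(f^q))$ and noting that $(\nabla_v\chi)\cdot(\bar a^f\nabla_v(f^q))$ should be rewritten, via another divergence, as $\nabla_v\cdot(f^q\bar a^f\nabla_v\chi) - f^q\nabla_v\cdot(\bar a^f\nabla_v\chi)$ (using symmetry of $\bar a^f$), I can collect every term of the form $\nabla_v\cdot(f^q\times\text{stuff})$ into $\nabla_v\cdot H_1$ and every remaining term (those multiplied by $f^q$ without an outer divergence) into $H_0$. The $\chi\bar b^f\cdot\nabla_v(f^q)$ term is handled the same way: $\chi\bar b^f\cdot\nabla_v(f^q) = \nabla_v\cdot(\chi f^q\bar b^f) - f^q\nabla_v\cdot(\chi\bar b^f)$, contributing $\chi\bar b^f$ to $H_1$ and $-f^q\nabla_v\cdot(\chi\bar b^f)$ to $H_0$. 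The $q\chi\bar c^f f\cdot f^{q-1} = q\chi\bar c^f f^q$ term goes straight into $H_0$. Bookkeeping the factor $-2\bar a^f\nabla_v\chi$ in $H_1$ comes from combining $+\bar a^f\nabla_v\chi$ (from the $f^q\nabla_v\chi$ piece inside the principal term moving to the left) with $-\bar a^f\nabla_v\chi$ appearing twice more in the rearrangement; I would simply verify the coefficients match the stated $H_1 = f^q[-2\bar a^f\nabla_v\chi + \chi\bar b^f]$.

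The main obstacle, though it is more a matter of care than of depth, is the consistent handling of the two "integration by parts in $v$" rewrites that shuffle terms between the $H_0$ and $\nabla_v\cdot H_1$ buckets — getting every sign and every factor of $2$ right — together with the one genuine inequality step (discarding the nonnegative $q(q-1)f^{q-2}\chi(\nabla_v f)\cdot(\bar a^f\nabla_v f)$ term), which is what turns the identity into the claimed differential inequality and is valid precisely because $q\geq 1$, $f\geq 0$, $\chi\geq 0$ on its support (or more robustly because the discarded term multiplied by $\chi$ need not be sign-definite unless $\chi\geq 0$; if $\chi$ is allowed to change sign one instead keeps this term inside $H_0$ — but here $\chi$ is a cutoff, so nonnegativity is safe to assume, or one notes the term can be absorbed). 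I would double-check at the end by expanding $\nabla_v\cdot(\bar a^f\nabla_v(\chi f^q)) + H_0 + \nabla_v\cdot H_1$ with the stated $H_0, H_1$ and confirming it equals $(\partial_t + v\cdot\nabla_x)(\chi f^q)$ plus the discarded nonnegative term, which gives the $\leq$.
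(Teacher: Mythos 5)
Your proof is correct and takes essentially the same approach as the paper: apply the product rule, substitute the divergence form of $Q(f,f)$, drop the nonnegative term $q(q-1)\chi f^{q-2}(\nabla_v f)\cdot(\bar a^f\nabla_v f)$, and shuffle derivatives between $H_0$ and $\nabla_v\cdot H_1$ to match the stated formulas. Your observation that dropping that term requires $\chi\geq 0$ is a legitimate point (the paper leaves this implicit, but $\chi$ is always taken to be a nonnegative cutoff in the application).
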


\begin{proof}
The proof is a direct calculation involving several applications of the product rule. In more detail, using the equation \eqref{e:landau}, we have
\begin{equation}\label{e:abc-eqn}
\begin{split}
(\partial_t + v\cdot \nabla_x)(\chi f^q) 
&= f^q (\partial_t + v\cdot \nabla_x) \chi + q f^{q-1} \chi [\nabla_v\cdot(\bar a^f \nabla_v f) + \bar b^f\cdot \nabla_v f + \bar c^f f].
\end{split}
\end{equation}
For the term on the right involving $\bar a^f$, we have
\[
\begin{split}
q f^{q-1} \chi \nabla_v\cdot(\bar a^f \nabla_v f) &= q\nabla_v\cdot (f^{q-1} \chi \bar a^f \nabla_v f) - q\nabla_v( f^{q-1}\chi)\cdot(\bar a^f \nabla_v f)\\
&= \nabla_v\cdot (\chi \bar a^f \nabla_v(f^q)) - q\chi \nabla_v(f^{q-1})\cdot (\bar a^f \nabla_v f) - qf^{q-1} \nabla_v \chi \cdot (\bar a^f \nabla_v f)\\
&= \nabla_v\cdot (\bar a^f \nabla_v(\chi f^q)) - \nabla_v \cdot ( f^q \bar a^f \nabla_v \chi)\\
&\quad - q(q-1) \chi f^{q-2}\nabla_v f \cdot (\bar a^f \nabla_v f) - \nabla_v \chi\cdot (\bar a^f\nabla_v f^q)\\
&\leq \nabla_v\cdot (\bar a^f \nabla_v(\chi f^q)) - \nabla_v \cdot ( f^q \bar a^f \nabla_v \chi) - \nabla_v \chi\cdot (\bar a^f\nabla_v f^q),
\end{split}
\]
by the positive-definiteness of $\bar a^f$. Applying the product rule again in the last term, we have
\[
q f^{q-1} \chi \nabla_v\cdot(\bar a^f \nabla_v f) \leq \nabla_v\cdot (\bar a^f \nabla_v(\chi f^q)) - 2\nabla_v \cdot ( f^q \bar a^f \nabla_v \chi) +f^q \nabla_v\cdot(\bar a^f\nabla_v \chi).
\]
For the $\bar b^f$ term in \eqref{e:abc-eqn}, we have
\[
\begin{split}
q f^{q-1} \chi \bar b^f\cdot \nabla_v f &= \chi \bar b^f\cdot \nabla_v (f^q) = \nabla_v\cdot(\chi  f^q \bar b^f) - f^q\nabla_v\cdot( \chi \bar b^f).
\end{split}
\]
After collecting terms, we obtain the statement of the lemma.
\end{proof}

\begin{lemma}\label{l:hypo}
With $f$, $\chi$, $H_0$, and $H_1$ as in Lemma \ref{l:subsolution}, the following inequality holds for any $q\geq 1$:
\[
\|\chi f^q\|_{L^{42/19}(Q_1)}^2 \leq C \left(\frac{1+\|\bar a^f\|_{L^\infty(Q_1)}^2} {\lambda^2}\right) \left( \|H_0\|_{L^2(Q_1)}^2 + \|H_1\|_{L^2(Q_1)}^2\right),
\]
for a universal constant $C>0$. In particular, $C>0$ is independent of $q$ and $\chi$. 
\end{lemma}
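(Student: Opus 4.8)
The plan is to combine a local energy estimate in the $v$-variable with the hypoelliptic gain of integrability coming from velocity averaging, keeping careful track of how every constant depends on $\lambda$ and $\|\bar a^f\|_{L^\infty(Q_1)}$ (and of the fact that it must be independent of $q$ and of $\chi$). Set $g := \chi f^q \ge 0$. By Lemma \ref{l:subsolution}, $g$ is a nonnegative subsolution
\[
(\partial_t + v\cdot\nabla_x) g \le \nabla_v\cdot(\bar a^f\nabla_v g) + H_0 + \nabla_v\cdot H_1
\]
in $Q_1$, and since $\chi$ is compactly supported in $Q_1$ we may extend $g$ by zero to all of $\R^7$; in particular $g$ vanishes for $|v|\ge 1$, which will let us use a Poincaré inequality in $v$.

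First I would test this inequality against $g$ itself and integrate over $\R^6\times(-1,\tau]$ for $\tau\in(-1,0]$. The transport term contributes the nonnegative boundary term $\tfrac12\int_{\{t=\tau\}}g^2$; the diffusion term is bounded below by $\lambda\|\nabla_v g\|_{L^2}^2$ via the ellipticity hypothesis; and the two source terms are controlled by Cauchy--Schwarz. Since $g(\cdot,v)\equiv 0$ for $|v|\ge1$, the Poincaré inequality gives $\|g\|_{L^2}\lesssim\|\nabla_v g\|_{L^2}$, so after one round of Young's inequality everything absorbs and one obtains
\[
\|g\|_{L^\infty_t L^2_{x,v}(Q_1)}^2 + \|\nabla_v g\|_{L^2(Q_1)}^2 \lesssim \lambda^{-2}\bigl(\|H_0\|_{L^2(Q_1)}^2 + \|H_1\|_{L^2(Q_1)}^2\bigr),
\]
with a universal implied constant; note $q$ and $\chi$ have dropped out, because Lemma \ref{l:subsolution} has already packaged all of their contributions into $H_0$ and $H_1$.

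Next, to upgrade $L^2$ integrability, I would rewrite the inequality as the identity $(\partial_t+v\cdot\nabla_x)g = \nabla_v\cdot G_1 + G_0 - R$ with $G_1 := \bar a^f\nabla_v g + H_1$, $G_0 := H_0$, and $R\ge 0$ the defect; integrating this identity and using the compact support (the divergence and transport terms drop and $\int_{\{t=0\}}g\ge0$) bounds the total mass of $R$ by $\|H_0\|_{L^1(Q_1)}\lesssim\|H_0\|_{L^2(Q_1)}$. Then I would invoke the velocity-averaging / hypoelliptic regularity lemma for the transport operator $\partial_t + v\cdot\nabla_x$, in the form used in the proof of \cite[Theorem 12]{golse2016}: a compactly supported $g$ with $g,\nabla_v g\in L^2(\R^7)$ satisfying such an equation, with $G_0,G_1\in L^2(\R^7)$ and a finite-mass nonnegative defect, lies in $H^{1/3}(\R^7)$ with $\|g\|_{H^{1/3}(\R^7)}\lesssim \|g\|_{L^2}+\|\nabla_v g\|_{L^2}+\|G_0\|_{L^2}+\|G_1\|_{L^2}$ — here $1/3$ is the usual rate at which the transport operator transfers $v$-regularity to the $x$ and $t$ variables. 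Since $\|G_1\|_{L^2}\lesssim\|\bar a^f\|_{L^\infty(Q_1)}\|\nabla_v g\|_{L^2}+\|H_1\|_{L^2}$, feeding in the bounds from the energy estimate gives
\[
\|g\|_{H^{1/3}(\R^7)}^2 \lesssim \frac{1+\|\bar a^f\|_{L^\infty(Q_1)}^2}{\lambda^2}\bigl(\|H_0\|_{L^2(Q_1)}^2+\|H_1\|_{L^2(Q_1)}^2\bigr).
\]
Finally, the Sobolev embedding $H^{1/3}(\R^7)\hookrightarrow L^{42/19}(\R^7)$ (one checks $\frac{19}{42}=\frac12-\frac{1/3}{7}$) together with the domination $\|g\|_{L^{42/19}(Q_1)}\le\|g\|_{L^{42/19}(\R^7)}$ yields the claimed estimate.

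The main obstacle is the gain-of-integrability step: because $g$ is only a subsolution, the averaging estimate must be run with a nonnegative defect measure in place of an honest $L^2$ source, so one needs the hypoelliptic gain to be insensitive to such a defect once its total mass is controlled — this is exactly the subsolution formulation of the averaging lemma already present in \cite{golse2016}, so the work is to quote it correctly and verify its hypotheses in our setting. The only other thing requiring attention is the bookkeeping: checking that the final constant depends on $\bar a^f$ only through $\|\bar a^f\|_{L^\infty(Q_1)}$ and the ellipticity constant $\lambda$ (this is what will make the estimate rescale correctly in the next section) and, as the statement emphasizes, is independent of $q$ and $\chi$ — which is automatic here since no integration by parts is performed against $\chi$ beyond what was already done in Lemma \ref{l:subsolution}.
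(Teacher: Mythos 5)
Your approach is genuinely different from the paper's at one crucial juncture. The paper does \emph{not} apply the averaging lemma to $\chi f^q$ directly. Instead, it defines an auxiliary function $g$ as the solution to the linear Cauchy--Dirichlet problem
\[
(\partial_t + v\cdot\nabla_x)g = \nabla_v\cdot(\bar a^f\nabla_v g) + H_0 + \nabla_v\cdot H_1 \quad \text{in } Q_1, \qquad g = 0 \text{ on the parabolic boundary},
\]
and uses the comparison principle together with Lemma~\ref{l:subsolution} to get $\chi f^q \leq g$. Because this $g$ satisfies the equation with equality and $L^2$ sources, Bouchut's hypoelliptic estimate \cite[Theorem 1.3]{bouchut2002hypoelliptic} applies to $g$ with no defect term; the energy estimate for $g$ (together with Poincar\'e) feeds the ellipticity constant into Bouchut's estimate, and $H^{1/3}(\R^7)\subset L^{42/19}(\R^7)$ finishes. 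The pointwise inequality $\chi f^q \leq g$ then transfers the $L^{42/19}$ bound back.

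You instead work with $g := \chi f^q$ itself and therefore need a subsolution formulation of the averaging lemma that tolerates the nonnegative defect $R = q(q-1)\chi f^{q-2}\nabla_v f\cdot(\bar a^f\nabla_v f)$. You correctly flag this as the main obstacle, and your bound $\|R\|_{L^1(Q_1)}\lesssim \|H_0\|_{L^2(Q_1)}$ by integrating the identity is right. But I would not grant without verification that a finite-mass nonnegative defect still yields the \emph{full} $H^{1/3}(\R^7)$ gain: Bouchut's theorem is stated for $L^2$ sources, and an $L^1$ right-hand side generically costs either some fractional order or some integrability in the target exponent, and the nonnegativity of the defect is not obviously exploitable in a Fourier-side averaging argument. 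You would need to locate the precise statement in \cite{golse2016}; if the machinery behind their Theorem~12 really does contain such a subsolution lemma at exponent $1/3$, your route goes through and is arguably more self-contained (it avoids needing an existence theory and a comparison principle for the auxiliary Cauchy--Dirichlet problem). If not, the auxiliary-function trick is precisely the device that sidesteps the defect and should replace your defect-decomposition step. The remainder of your argument --- the energy estimate for the subsolution, the Poincar\'e absorption, the Sobolev arithmetic $\tfrac{19}{42} = \tfrac12 - \tfrac{1/3}{7}$, and the observation that $q$ and $\chi$ have already been packaged into $H_0$, $H_1$ --- is correct and matches the paper.
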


\begin{proof}
Let $g$ be the solution to
\begin{equation}\label{e:g-equation}
\begin{split}
(\partial_t +v\cdot\nabla_x) g &= \nabla_v \cdot (\bar a^f \nabla_v g) + H_0 + \nabla_v \cdot H_1, \quad (t,x,v) \in Q_1,\\
 g &= 0,  \quad \text{ $t=-1$ or $|x| = 1$ or $|v| = 1$}.
\end{split}
\end{equation}
Here, $H_0$ and $H_1$ are defined in terms of the function $f$.  By Lemma \ref{l:subsolution}, the function $h = g- \chi f^q$ satisfies $(\partial_t + v\cdot\nabla_x) h = \nabla_v \cdot (\bar a^f \nabla_v h)$. Integrating this equation against $h$ over the domain $Q_1$ and using the fact that $h = 0$ on the time slice $t=-1$ and the boundary $\{ |x| =1\} \cup \{|v| = 1\}$, we conclude $g \geq \chi f^q$ in $Q_1$.


Next, integrating equation \eqref{e:g-equation} against $g$ over $Q_1 \subset\R^{7}$, we obtain (writing $\dd z = \dd t \dd x \dd v$)
\[
\begin{split}
\frac 1 2  \int_{Q_1} \frac d {dt} g^2 \dd z   &\leq -   \lambda  \int_{Q_1} |\nabla_v g|^2\dd z + \int_{Q_1} g H_0 \dd z - \int_{Q_1} \nabla_v g \cdot H_1 \dd z\\
&\leq \int_{Q_1} g H_0 \dd z  -  \frac \lambda 2 \int_{Q_1} |\nabla_v g |^2 \dd z + \frac 1 {2\lambda} \int_{Q_1} |H_1|^2 \dd z .\\
\end{split}
\]
using the fact that $g=0$ on $\{|x| = 1\} \cup \{|v| = 1\}$. The left side of this inequality is nonnegative because
  $g=0$ on the time slice $\{t=-1\}$, and $g \geq \chi f^q \geq 0$ on the time slice $\{t=0\}$.  Letting $C_P$ be the constant in the Poincar\'e inequality $\int_{B_1} g^2 \dd v \leq C_P \int_{B_1} |\nabla_v g|^2 \dd v$, we have
\[
\begin{split}
0 &\leq  \frac{\lambda }{4 C_P} \int_{Q_1} g^2 \dd z + \frac {C_P } {4\lambda} \int_{Q_1} H_0^2 \dd z -  \frac \lambda 2 \int_{Q_1} |\nabla_v g |^2 \dd z + \frac 1 {2\lambda} \int_{Q_1} |H_1|^2 \dd z \\
&\leq  -  \frac \lambda 4 \int_{Q_1} |\nabla_v g |^2 \dd z + \frac {C_P } {4\lambda} \int_{Q_1} H_0^2 \dd z + \frac 1 {2\lambda} \int_{Q_1} |H_1|^2 \dd z .
\end{split}
\]
Collecting terms, we obtain
\begin{equation}\label{e:g-energy}
 \int_{Q_1} |\nabla_v g|^2 \dd z \leq C \|g\|_{L^2(Q_1)}^2 + \frac C{\lambda^2} \left(\|H_0\|_{L^2(Q_1)}^2 + \|H_1\|_{L^2(Q_1)}^2\right).
\end{equation}

Next, we apply the hypoelliptic estimate of Bouchut \cite[Theorem 1.3]{bouchut2002hypoelliptic} to $g$, yielding
\[
\begin{split}
\|D_t^{1/3}g\|_{L^2(Q_1)}^2 + \|D_x^{1/3}g\|_{L^2(Q_1)}^2  &\lesssim \|g\|_{L^2(Q_1)}^2 + \|\nabla_v g\|_{L^2(Q_1)} \| \vv H_0\|_{L^2(Q_1)}\\
&\quad + \|\nabla_v g\|_{L^2(Q_1)}^{4/3}\|\vv(H_1+\bar a^f \nabla_v g)\|_{L^2(Q_1)}^{2/3}\\
&\quad + \|\nabla_v g\|_{L^2(Q_1)} \|\vv(H_1+\bar a^f \nabla_v g)\|_{L^2(Q_1)}\\
&\lesssim \|g\|_{L^2(Q_1)}^2 + \|\nabla_v g\|_{L^2(Q_1)}^2\\ 
&\quad+ \| \vv H_0\|_{L^2(Q_1)}^2 + \|\vv(H_1+\bar a^f \nabla_v g)\|_{L^2(Q_1)}^2.
\end{split}
\]
By the Poincar\'e inequality, the term $\|g\|_{L^2(Q_1)}^2$ on the right can be absorbed into $\|\nabla_v g\|_{L^2(Q_1)}^2$. Adding $\|\nabla_v g\|_{L^2(Q_1)}^2$ to both sides and using $\vv\leq 1$ as well as the energy estimate \eqref{e:g-energy}, we obtain
\[
\begin{split}
\|D_t^{1/3}g\|_{L^2(Q_1)}^2 + &\|D_x^{1/3}g\|_{L^2(Q_1)}^2 + \|\nabla_v g\|_{L^2(Q_1)}^2 \\
 &\lesssim (1+\|\bar a^f\|_{L^\infty(Q_1)}^2)\|\nabla_v g\|_{L^2(Q_1)}^2 + \|H_0\|_{L^2(Q_1)}^2 + \|H_1\|_{L^2(Q_1)}^2 \\
 &\lesssim \frac{1+\|\bar a^f\|_{L^\infty(Q_1)}^2} {\lambda^2} \left( \|H_0\|_{L^2(Q_1)}^2 + \|H_1\|_{L^2(Q_1)}^2\right).
\end{split}
\]
We now apply the Sobolev embedding $H^{1/3}(\R^7)\subset L^{42/19}(\R^7)$ to obtain 
\[
\begin{split}
\|g\|_{L^{42/19}(Q_1)} \lesssim  \frac{1+\|\bar a^f\|_{L^\infty(Q_1)}^2} {\lambda^2} \left( \|H_0\|_{L^2(Q_1)}^2 + \|H_1\|_{L^2(Q_1)}^2\right).
\end{split}
\]
With the inequality $\chi f^q \leq g$, the proof is complete.
\end{proof}

\begin{lemma}\label{l:gain}
Let $f\geq 0$ be a solution of the Landau equation in  $(-1,0]\times B_1\times \R^3_v$. For any $0< r_0 < r_1< 1$ and $q\geq 1$, there holds
\[
\begin{split}
\|f\|_{L^{\sigma q}(Q_{r_0})}^{2q} &\leq C \left(\frac {1 + \|\bar a^f\|_{L^\infty(Q_1)}^2} { \lambda^2} \right) \left( 1 + \|\bar a^f\|_{L^\infty(Q_1)}^2 + \|\bar b^f\|_{L^\infty(Q_1)}^2 + \|\bar c^f\|_{L^\infty(Q_1)}^2\right)\\
&\quad \quad\times \left( (r_1-r_0)^{-4} + q^2\right) \|f\|_{L^{2q}(Q_{r_1})}^{2q},
\end{split}
\]
with $\sigma = 42/19$,  and $C>0$ a constant depending only on $\gamma$.
\end{lemma}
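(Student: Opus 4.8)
This is a single step of a Moser iteration, so the plan is to combine the subsolution inequality of Lemma~\ref{l:subsolution} with the gain-of-integrability estimate of Lemma~\ref{l:hypo}, with a carefully chosen cutoff, and then to estimate the error terms $H_0$ and $H_1$ using the coefficient bounds. First I would fix a cutoff $\chi \in C^\infty_c(Q_{r_1})$ with $0 \le \chi \le 1$ and $\chi \equiv 1$ on $Q_{r_0}$, built as a product of one-dimensional cutoffs adapted to the anisotropic scales of the kinetic cylinders: scale $r_1^2 - r_0^2$ in $t$, scale $r_1^3 - r_0^3$ in $x$, and scale $r_1 - r_0$ in $v$. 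Using $r_1 < 1$ together with the elementary bounds $r_1^2 - r_0^2 \ge (r_1-r_0)^2$, $\ r_1^3 - r_0^3 \ge r_1^2(r_1-r_0)$, and $r_1 \ge r_1 - r_0$, and the fact that $|v| < r_1 \le 1$ on $\supp \chi$, such a $\chi$ can be taken to satisfy
\[
|\nabla_v \chi| \lesssim \frac{1}{r_1-r_0}, \qquad |D^2_v \chi| \lesssim \frac{1}{(r_1-r_0)^2}, \qquad |(\partial_t + v\cdot\nabla_x)\chi| \lesssim \frac{1}{(r_1-r_0)^2};
\]
the key point for the last estimate is that $|v|\,|\nabla_x \chi| \lesssim r_1 \cdot r_1^{-2} (r_1-r_0)^{-1} \le (r_1-r_0)^{-2}$, so the transport derivative does not see the worse $x$-scaling.

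Next I would bound $H_0$ and $H_1$ from Lemma~\ref{l:subsolution} pointwise. Using the structural identities $\nabla_v \cdot \bar a^f = \bar b^f$ and $\nabla_v\cdot \bar b^f = \bar c^f$ (up to fixed dimensional constants, and interpreted distributionally when $\gamma = -3$), one rewrites $\nabla_v\cdot(\bar a^f \nabla_v\chi) = \bar b^f \cdot \nabla_v\chi + \tr(\bar a^f D^2_v\chi)$ and $\nabla_v\cdot(\chi \bar b^f) = \bar b^f\cdot\nabla_v\chi + C_\gamma \chi \bar c^f$, so on $\supp\chi$ the coefficient bounds together with the cutoff estimates above give
\[
|H_0| \lesssim f^q \Big( \frac{1 + \|\bar a^f\|_{L^\infty(Q_1)}}{(r_1-r_0)^2} + \frac{\|\bar b^f\|_{L^\infty(Q_1)}}{r_1-r_0} + q\,\|\bar c^f\|_{L^\infty(Q_1)} \Big), \qquad |H_1| \lesssim f^q \Big( \frac{\|\bar a^f\|_{L^\infty(Q_1)}}{r_1-r_0} + \|\bar b^f\|_{L^\infty(Q_1)} \Big).
\]
Squaring, using $r_1 - r_0 < 1$ to absorb all lower negative powers into $(r_1-r_0)^{-4}$, and using that $H_0, H_1$ are supported in $Q_{r_1}$ together with $\|f^q\|_{L^2(Q_{r_1})}^2 = \|f\|_{L^{2q}(Q_{r_1})}^{2q}$, one obtains
\[
\|H_0\|_{L^2(Q_1)}^2 + \|H_1\|_{L^2(Q_1)}^2 \lesssim \big(1 + \|\bar a^f\|_{L^\infty(Q_1)}^2 + \|\bar b^f\|_{L^\infty(Q_1)}^2 + \|\bar c^f\|_{L^\infty(Q_1)}^2\big)\big((r_1-r_0)^{-4} + q^2\big)\,\|f\|_{L^{2q}(Q_{r_1})}^{2q}.
\]

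Finally, I would feed this into Lemma~\ref{l:hypo} and use $\chi \equiv 1$ on $Q_{r_0}$, so that $\|f\|_{L^{\sigma q}(Q_{r_0})}^{2q} = \|f^q\|_{L^\sigma(Q_{r_0})}^2 \le \|\chi f^q\|_{L^\sigma(Q_1)}^2$ with $\sigma = 42/19$; chaining the displays produces exactly the claimed inequality, with the factor $1 + \|\bar a^f\|_{L^\infty(Q_1)}^2$ coming from Lemma~\ref{l:hypo} and the remaining factor from the bound on $\|H_0\|_{L^2}^2 + \|H_1\|_{L^2}^2$. I expect the only delicate point to be the bookkeeping: one must check that the terms of $H_0$ that scale like $(r_1-r_0)^{-2}$ --- namely $(\partial_t+v\cdot\nabla_x)\chi$ and $\tr(\bar a^f D^2_v\chi)$ --- and the zeroth-order term $q\chi\bar c^f$ are exactly what generate the factor $(r_1-r_0)^{-4} + q^2$ after squaring, and that every coefficient norm appearing can be collected into the stated product. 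Beyond this, and the routine verification of the cutoff bounds under the kinetic scaling, there is no real obstacle.
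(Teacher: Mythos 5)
Your proposal is correct and follows essentially the same route as the paper: simplify $H_0$ via the divergence identities relating $\bar a^f$, $\bar b^f$, $\bar c^f$, choose a cutoff $\chi$ with $|(\partial_t+v\cdot\nabla_x)\chi|,|D_v^2\chi|\lesssim(r_1-r_0)^{-2}$ and $|\nabla_v\chi|\lesssim(r_1-r_0)^{-1}$, bound $\|H_0\|_{L^2}^2+\|H_1\|_{L^2}^2$ by the stated combination of coefficient norms times $((r_1-r_0)^{-4}+q^2)\|f\|_{L^{2q}(Q_{r_1})}^{2q}$, and feed the result into Lemma~\ref{l:hypo}. The only (immaterial) discrepancies are sign conventions in $\nabla_v\cdot\bar a^f=-\bar b^f$, $\nabla_v\cdot\bar b^f=-\bar c^f$ and the harmless replacement of the paper's $(q+1)\chi\bar c^f$ by $q\chi\bar c^f$; your explicit derivation of the transport-derivative bound from the anisotropic kinetic scales (using $r_1^3-r_0^3\ge r_1^2(r_1-r_0)$ and $|v|\le r_1$) is a detail the paper states without proof and is correct.
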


\begin{proof}
First, we simplify $H_0$ using the relationships
\[
\sum_j \partial_{v_j}\bar a_{ij}^f = -\bar b_i^f, \quad \nabla_v\cdot \bar b^f = -\bar c^f,
\]
giving
\[
H_0 =f^q[(\partial_t + v\cdot\nabla_x)\chi + \tr(\bar a^f D_v^2 \chi)  - 2\bar b^f\cdot\nabla_v\chi + (q+1) \chi \bar c^f.
\]
Next, we choose $\chi \in C_0^\infty(Q_1)$ so that $\chi = 1$ in $Q_{r_0}$ and $\chi = 0$ outside $Q_{r_1}$. Such a $\chi$ can be chosen so that
\[
|(\partial_t + v\cdot \nabla_x)\chi |\lesssim (r_1-r_0)^{-2}, \quad |\nabla_v\chi|\lesssim (r_1-r_0)^{-1}, \quad |D_v^2 \chi| \lesssim (r_1-r_0)^{-2}.
\]
 (The existence of a cutoff $\chi$ with the desired properties follows from the geometry of kinetic cylinders. In more detail, one may construct a cutoff $\widetilde \chi$ that equals 1 in $Q_{r_0/(r_1-r_0)}$ and zero outside $Q_{r_1/(r_1-r_0)}$, and $\widetilde \chi$ can be chosen with $t$, $x$, and $v$ derivatives all bounded by a universal constant. Next, let $\chi(t,x,v) = \widetilde \chi(\rho^2 t, \rho^3 x, \rho v)$ with $\rho = r_1 - r_0$. By direct calculation, $\chi$ satisfies the stated bounds on its derivatives.)

With this choice of $\chi$, note that $H_0$ and $H_1$ are zero outside $Q_{r_1}$. We bound $H_0$ and $H_1$ as follows, using $r_1 - r_0 < 1$:
\[
\begin{split}
\|H_0\|_{L^2(Q_1)} &\lesssim \|f^q\|_{L^2(Q_{r_1})} \left[  (r_1 - r_0)^{-2} \left( 1 + \|\bar a^f\|_{L^\infty(Q_1)} + \|\bar b^f\|_{L^\infty(Q_1)}\right) + q\|\bar c^f\|_{L^\infty(Q_1)} \right]\\
&\lesssim \|f\|_{L^{2q}(Q_{r_1})}^q \left( 1 + \|\bar a^f\|_{L^\infty(Q_1)} + \|\bar b^f\|_{L^\infty(Q_1)} + \|\bar c^f\|_{L^\infty(Q_1)}\right) \left( (r_1 - r_0)^{-2} + q\right),
\end{split}
\]
and
\[
\begin{split}
\|H_1\|_{L^2(Q_1)} &\lesssim  \|f\|_{L^{2q}(Q_{r_1})}^q \left( \|\bar a^f\|_{L^\infty(Q_1)} (r_1 - r_0)^{-1} + \|\bar b^f\|_{L^\infty(Q_1)}\right)\\
&\lesssim \|f\|_{L^{2q}(Q_{r_1})}^q (r_1 - r_0)^{-1} \left(1 + \|\bar a^f\|_{L^\infty(Q_1)} + \|\bar b^f\|_{L^\infty(Q_1)}\right).
\end{split}
\]
Combining these estimates for $H_0$ and $H_1$ with Lemma \ref{l:hypo} yields the conclusion of the lemma. 
\end{proof}

Now we use Lemma \ref{l:gain} and a classical Moser iteration procedure to prove a local $L^\infty$ estimate for $f$:

\begin{proposition}\label{p:inf}
Let $f:(-1,0]\times\R^3\times\R^3\to [0,\infty)$ solve the Landau equation \eqref{e:landau} in   $(-1,0]\times B_1\times \R^3_v$. Assume that the coefficients $\bar a^f$, $\bar b^f$, and $\bar c^f$ are essentially bounded in $Q_1$, and that the matrix $\bar a^f$ satisfies the lower ellipticity bound
\[
e\cdot (\bar a^f(t,x,v) e) \geq \lambda, \quad (t,x,v) \in Q_1, e\in \mathbb S^2.
\]
 Then 
\[
\|f\|_{L^\infty(Q_{1/2})} \leq C \left(\frac {1 + \|\bar a^f\|_{L^\infty(Q_1)}^2} { \lambda^2} \right)^{19/4} \left( 1 + \|\bar a^f\|_{L^\infty(Q_1)}^2 + \|\bar b^f\|_{L^\infty(Q_1)}^2 + \|\bar c^f\|_{L^\infty(Q_1)}^2\right)^{19/4} \|f\|_{L^2(Q_1)},
\]
where the constant $C$ depends only on $\gamma$. 
\end{proposition}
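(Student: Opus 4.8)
The plan is to run a standard Moser iteration on top of the gain-of-integrability estimate in Lemma~\ref{l:gain}, carefully tracking how the constants accumulate so that only the final exponent $19/4$ survives on the coefficient factor. Write $\kappa = 1 + \|\bar a^f\|_{L^\infty(Q_1)}^2 + \|\bar b^f\|_{L^\infty(Q_1)}^2 + \|\bar c^f\|_{L^\infty(Q_1)}^2$ and $\Lambda = (1+\|\bar a^f\|_{L^\infty(Q_1)}^2)/\lambda^2$, so that Lemma~\ref{l:gain} reads $\|f\|_{L^{\sigma q}(Q_{r_0})}^{2q} \le C\,\Lambda\,\kappa\,((r_1-r_0)^{-4}+q^2)\,\|f\|_{L^{2q}(Q_{r_1})}^{2q}$ with $\sigma = 42/19 > 2$.

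First I would set up the iteration: let $q_k = (\sigma/2)^k$ for $k \ge 0$, so $2q_{k+1} = \sigma q_k$, and let $r_k = \tfrac12 + 2^{-k-1}$ be a decreasing sequence of radii with $r_0 = 1$ and $r_k \to 1/2$, so that $r_k - r_{k+1} \approx 2^{-k}$. Applying Lemma~\ref{l:gain} with $q = q_k$, $r_1 = r_k$, $r_0 = r_{k+1}$, and taking the $2q_k$-th root gives
\[
\|f\|_{L^{2q_{k+1}}(Q_{r_{k+1}})} \le \left(C\,\Lambda\,\kappa\,(C\,2^{4k} + q_k^2)\right)^{1/(2q_k)} \|f\|_{L^{2q_k}(Q_{r_k})}.
\]
Since both $2^{4k}$ and $q_k^2 = (\sigma/2)^{2k}$ grow exponentially in $k$, the bracketed factor is bounded by $C\,\Lambda\,\kappa\, D^k$ for a universal $D>1$, and iterating from $k=0$ to $\infty$ yields
\[
\|f\|_{L^\infty(Q_{1/2})} \le \prod_{k=0}^\infty \left(C\,\Lambda\,\kappa\,D^k\right)^{1/(2q_k)} \|f\|_{L^2(Q_1)}
= (C\Lambda\kappa)^{\sum_k 1/(2q_k)}\, D^{\sum_k k/(2q_k)}\, \|f\|_{L^2(Q_1)}.
\]
Both series converge because $1/q_k$ decays geometrically; the key computation is $\sum_{k\ge 0} \tfrac{1}{2q_k} = \tfrac12 \sum_{k\ge0} (2/\sigma)^k = \tfrac12 \cdot \tfrac{1}{1 - 2/\sigma} = \tfrac12 \cdot \tfrac{\sigma}{\sigma - 2} = \tfrac12 \cdot \tfrac{42/19}{4/19} = \tfrac12 \cdot \tfrac{42}{4} = \tfrac{21}{4}$. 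This does not immediately give $19/4$, so I would instead only extract the $\kappa$-exponent: since $\Lambda$ should appear to the first power, I would split $C\Lambda\kappa \le \Lambda \cdot (C\kappa)$ is not quite right either — rather, I'd note that the first-order appearance of $\Lambda$ in the statement means one should apply Lemma~\ref{l:gain} with the $\Lambda$ factor pulled out only at the first step and absorbed more cleverly. The cleanest route: bound $\Lambda \kappa \le \Lambda \kappa$ crudely but observe the target exponent on $\kappa$ is $19/4$, so I would re-examine whether Lemma~\ref{l:gain}'s factor should be read as $\Lambda \cdot \kappa^{?}$; tracking $H_0, H_1$ through Lemma~\ref{l:hypo}, the factor is $\Lambda \cdot \kappa$ where $\kappa$ already includes one power, and $\sum 1/(2q_k) \cdot (\text{power of }\kappa\text{ per step})$ must land on $19/4$. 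Matching $19/4$ against $\tfrac{21}{4}$ times the per-step $\kappa$ power forces the per-step $\kappa$ power to be $19/21$, which is not integer; the resolution is that one does \emph{not} iterate the $\Lambda$ factor (it is bounded below, $\Lambda \gtrsim 1$, only at the base case) and the bookkeeping should put $\Lambda$ to the first power with $\kappa$ to the power $\sum_{k\ge1}1/(2q_k) = \tfrac{21}{4} - \tfrac12 = \tfrac{19}{4}$ — i.e. the $k=0$ step contributes the bare $\Lambda$ (absorbing its $\kappa$) and steps $k \ge 1$ contribute only $\kappa^{1/(2q_k)}$ each, summing to $19/4$.

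So the key steps, in order, are: (1) rewrite Lemma~\ref{l:gain} with the abbreviations $\Lambda, \kappa$; (2) set up the geometric radius and exponent sequences; (3) iterate, being careful to extract the $\Lambda$ factor to first power at the initial step only (using $\Lambda \ge c > 0$ so that higher powers of $\Lambda$ can be dominated, or more simply reorganizing the product so the $\Lambda^{\sum 1/(2q_k)}$ with $\sum = 21/4$ is replaced by $\Lambda^1 \cdot \Lambda^{19/4}$ and the $\Lambda^{19/4}$ is absorbed into the $\kappa^{19/4}$ since $\Lambda \lesssim \kappa/\lambda^2$ — wait, that introduces extra $\lambda$; the honest statement in the proposition keeps $\Lambda$ to first power, so the argument must genuinely avoid compounding it); (4) sum the two convergent series to get the exponent $19/4$ on $\kappa$ and first power on $\Lambda$; (5) note all implied constants depend only on $\sigma = 42/19$ and hence only on $\gamma$ (in fact on nothing, but the statement says $\gamma$). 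The main obstacle is precisely step (3)–(4): the naive iteration produces $\Lambda^{21/4}$, and one must argue that the $\Lambda$ dependence can be reduced to first power — the standard trick is that after the first iteration step $f$ is already in $L^\sigma \subset L^{2}$ with a gain, so one can run the remaining iteration starting the $\Lambda$-counting afresh, or equivalently absorb $\Lambda^{21/4 - 1} = \Lambda^{17/4}$ into $\kappa^{19/4}$ using a crude bound relating $\Lambda$ and $\kappa$ up to $\lambda$-powers; getting the bookkeeping to match the stated exponents exactly is the delicate part, everything else is routine.
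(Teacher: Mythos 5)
Your approach is exactly the paper's: iterate Lemma \ref{l:gain} with $q_k=(\sigma/2)^k$ and geometrically shrinking radii, and sum the resulting geometric series in the exponents. Your setup in steps (1)--(2) and your series computation $\sum_{k\ge 0}1/(2q_k)=21/4$ are correct. The problem is entirely in your steps (3)--(4), and you are right to flag them as the delicate part: the mechanisms you propose there do not work. Every application of Lemma \ref{l:gain} carries the full factor $\Lambda\kappa$ (both come from Lemma \ref{l:hypo} applied at that level of the iteration, with the new $H_0,H_1$), so there is no legitimate way to ``not iterate the $\Lambda$ factor'' or to have the $k=0$ step ``absorb its $\kappa$''; and absorbing $\Lambda^{17/4}$ into $\kappa^{19/4}$ would, as you yourself notice, smuggle in extra negative powers of $\lambda$. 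The honest output of the iteration starting from $\|f\|_{L^2(Q_1)}$ is $(C\Lambda\kappa)^{21/4}\|f\|_{L^2(Q_1)}$, with the \emph{same} exponent on $\Lambda$ and $\kappa$.

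For your information, the paper's own proof gets $19/4$ by indexing the iteration from $i=1$ with $q_1=21/19$, so that $\sum_{i\ge1}1/(2q_i)=19/4$; but this means the base norm of the chain is $\|f\|_{L^{2q_1}(Q_1)}=\|f\|_{L^{42/19}(Q_1)}$ rather than $\|f\|_{L^2(Q_1)}$ (and $42/19>2$, so the former is not controlled by the latter on $Q_1$), and the proof's conclusion is $K_f^{19/4}=(C\Lambda\kappa)^{19/4}$, i.e.\ $\Lambda$ appears to the power $19/4$, not $1$. Indeed, the application in Theorem \ref{t:inf} raises the entire product $\lambda^{-1}\|\bar a^f\|(1+\|\bar a^f\|+\|\bar b^f\|+\|\bar c^f\|)$ to the power $19/2$, consistent with $(\Lambda\kappa)^{19/4}$. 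So do not contort the argument to land on $\Lambda^1\kappa^{19/4}$: prove the bound with $(\Lambda\kappa)^{21/4}$ (adding the missing $q=1$ step, which contributes $K_f^{1/2}=K_f^{21/4-19/4}$), and observe that the precise finite exponent is immaterial for the scaling argument downstream --- it only changes the numerical powers in Theorem \ref{t:inf} and Proposition \ref{p:E}. With that correction, your proof is complete; everything else (choice of cutoffs, domination of $q_k^2$ by $(r_k-r_{k+1})^{-4}$, convergence of $\sum k/(2q_k)$) is routine and you have it right.
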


\begin{proof}
Define the radii $r_i$ and exponents $q_i$ by
\[
r_i := \frac 1 2 + \left(\frac 1 2\right)^i, \quad q_i = \left(\frac \sigma 2\right)^i = \left(\frac {21}{19}\right)^i, \quad i = 1,2,\ldots
\]
Since Lemma \ref{l:gain} holds for any $q>1$ and any concentric cylinders $Q_{r_0} \subset Q_{r_1}$ with $r_0<r_1 <1 $, we have for each $i$ the inequality
\begin{equation}\label{e:ineq}
\begin{split}
\|f\|_{L^{\sigma q_i}(Q_{r_{i+1}})} &\leq  K_f^{1/(2q_i)}\left( (r_i - r_{i+1})^{-4} + q_i^2\right)^{1/(2q_i)} \|f\|_{L^{2q_i}(Q_{r_i})},
\end{split}
\end{equation}
with
\[
K_f := C\left(\frac {1 + \|\bar a^f\|_{L^\infty(Q_1)}^2} { \lambda^2} \right) \left( 1 + \|\bar a^f\|_{L^\infty(Q_1)}^2 + \|\bar b^f\|_{L^\infty(Q_1)}^2 + \|\bar c^f\|_{L^\infty(Q_1)}^2\right).
\]
Note that $q_i^2 = (21/19)^{2i} \leq (16)^{i+1} = (r_{i+1} - r_i)^{-4}$, so we can rewrite \eqref{e:ineq} as 
\[
\begin{split}
 \|f\|_{L^{2q_{i+1}}(Q_{r_{i+1}})} = \|f\|_{L^{\sigma q_i}(Q_{r_{i+1}})} &\leq  K_f^{1/(2q_i)}2^{(2i+5/2)/q_i} \|f\|_{L^{2q_i}(Q_{r_i})}.
\end{split}
\]
 Iterating from $i=1,2,\ldots$, we obtain the desired upper bound
 \[
 \|f\|_{L^\infty(Q_{1/2})} \leq K_f^{19/4} 2^{893/4} \|f\|_{L^2(Q_{1})},
 \]
 since
 \[
 \sum_{i=1}^\infty \frac 1 {2q_i} = \frac {19} 4 \quad \text{and} \quad \sum_{i=1}^\infty \frac{2i+5/2}{q_i} = \frac{893} 4.
 \]
\end{proof}

\section{Global $L^\infty$ estimate}\label{s:improved}

In this section, we improve the local $L^\infty$ bound of Proposition \ref{p:inf} via scaling techniques, and use them to prove the global upper bounds of Theorem \ref{t:other-upper}.

\begin{lemma}\label{l:Qr-est-scaling}
Let $f:[0,T]\times \R^6\to [0,\infty)$ be a solution of the Landau equation \eqref{e:landau} satisfying the hypotheses of Theorem \ref{t:other-upper}, as well as
\[
L_0 := \|f\|_{L^\infty([0,T]\times \R^6)} \geq 1.
\]
 Then, with $p = \dfrac 3{5+\gamma}$, any small $\delta>0$, and $r = L_0^{-p/3} \min\{1, \sqrt {t_0/2}\}$, there holds
\[
f(t_0,x_0,v_0) \leq C \langle v_0\rangle^{-19\gamma/2} (1+t_0^{-3/4}) L_0^{p/2} \|f\|_{L^\infty(Q_r(z_0))}^{(2-p-\delta)/2} \|f\|_{L^\infty_{t,x} L^{p+\delta}_v(Q_r(z_0))}^{(p+\delta)/2},
\]
where $C$ depends on $\gamma$, $\delta$, $\ell$, $\rho$, as well as 
\[
\sup_{t,x} \left( \int_{\R^3} f(t,x,v) \dd v + \|f(t,x,\cdot)\|_{L^{p+\delta}(\R^3)} \right).
\]
\end{lemma}

\begin{proof}
With $f$ as in the statement, define the rescaled solution 
\[
f_r(t,x,v) = r^{5+\gamma} f(t_0+r^2 t, x_0 + r^3x + r^2 t v_0, v_0 + rv).
\]
By direct calculation, $f_r$ is also a solution to the Landau equation. Applying the $L^\infty$ estimate of Proposition \ref{p:inf} to $f_r$, we have
\[
\begin{split}
f(t_0,x_0,v_0) &\leq \|f\|_{L^\infty(Q_{r/2}(z_0))}\\
 &= \frac{\|f_r\|_{L^\infty(Q_{1/2})}}{r^{5+\gamma}}\\
 &\lesssim  \left( \frac {\|\bar a^{f_r}\|_{L^\infty(Q_1)}} {\lambda[f_r]}(1+ \|\bar a^{f_r}\|_{L^\infty(Q_1)} + \|\bar b^{f_r}\|_{L^\infty(Q_1)} + \|\bar c^{f_r}\|_{L^\infty(Q_1)})\right)^{19/2} \frac {\|f_r\|_{L^2(Q_1)}} {r^{5+\gamma}} .
\end{split}
\]
Note that the coefficients appearing in this right-hand side are defined in terms of $f_r$, and $\lambda[f_r]$ is the lower ellipticity constant corresponding to $\bar a^{f_r}$. Calculating these coefficients in terms of $f$, we have
\[
\begin{split}
\bar a^{f_r}(t,x,v) &= \bar a^f(t_0+r^2 t, x_0 + r^3x + r^2 t v_0, v_0 + rv),\\
\bar b^{f_r}(t,x,v) &= r \bar b^f(t_0+r^2 t, x_0 + r^3x + r^2 t v_0, v_0 + rv),\\
\bar c^{f_r}(t,x,v) &= r^2 \bar c^f(t_0+r^2 t, x_0 + r^3x + r^2 t v_0, v_0 + rv),
\end{split}
\]
and from Lemma \ref{l:a-lower}, $e\cdot (a^{f_r}(t,x,v) e) \geq \lambda[f_r] \approx \langle v_0 \rangle^\gamma$ for all $e\in \mathbb S^2$. This yields
\[
\begin{split}
&f(t_0,x_0,v_0)\\
 &\lesssim \left( \frac {\|\bar a^{f}\|_{L^\infty(Q_r(z_0))}} {\langle v_0\rangle^\gamma}(1+ \|\bar a^{f}\|_{L^\infty(Q_r(z_0))} + r \|\bar b^{f}\|_{L^\infty(Q_r(z_0))} + r^2\|\bar c^{f}\|_{L^\infty(Q_r(z_0))})\right)^{19/2} \frac {\|f_r\|_{L^2(Q_1)}} {r^{5+\gamma}}.
\end{split}
\]

We analyze the $L^2$ norm on the right as follows:
\[
\frac{\|f_r\|_{L^2(Q_1)}}{r^{5+\gamma}} \leq \frac{\|f_r\|_{L^\infty_{t,x} L^2_v(Q_1)}}{r^{5+\gamma}} = r^{-3/2} \|f\|_{L^\infty_{t,x} L^2_v(Q_r(z_0))},
\]
from the definition of $f_r$. 
Recalling the notation $L_0 = \|f\|_{L^\infty([0,T]\times\R^6)}$, and incorporating the coefficient estimates from Lemma \ref{l:coeffs}, we have
\[
f(t_0,x_0,v_0) \lesssim  \langle v_0\rangle^{-19\gamma/2} \left( 1 + rL_0^{1-p(\gamma+4)/3} + r^2 L_0^{1-p(\gamma+3)/3}\right)^{19/2} r^{-3/2} \|f\|_{L^\infty_{t,x} L^2_v(Q_r(z_0))}.
\]
By our choice of $r$ in the statement of the lemma, the terms inside the parentheses balance,  
and the estimate becomes
\begin{equation}\label{e:19}
\begin{split}
f(t_0,x_0,v_0)
& \lesssim \langle v_0\rangle^{-19\gamma/2} \left(1 +  L_0^{1-p(\gamma+5)/3}\right)^{19/2} L_0^{p/2}(1+ t_0^{-3/4}) \|f\|_{L^\infty_{t,x} L^2_v(Q_r(z_0))}\\
& \lesssim \langle v_0\rangle^{-19\gamma/2} (1+t_0^{-3/4}) L_0^{p/2} \|f\|_{L^\infty_{t,x} L^2_v(Q_r)},
\end{split}
\end{equation}
where we used $p = \dfrac 3 {5+\gamma}$.  

Next, interpolating between $L^\infty$ and $L^{p+\delta}$ (since we can choose $\delta$ small enough that $p+\delta < 2$), we have
\[
 \|f\|_{L^\infty_{t,x}L^2_v(Q_r(z_0))} \leq \|f\|_{L^\infty(Q_r(z_0))}^{(2-p-\delta)/2} \|f\|_{L^\infty_{t,x} L^{p+\delta}_v(Q_r(z_0))}^{(p+\delta)/2}.
 \]
Using this in \eqref{e:19}, the proof is complete. 
\end{proof}

The growing power of $\langle v_0\rangle$ in Lemma \ref{l:Qr-est-scaling} will be dealt with in two different ways: by absorbing the growth into a weighted conditional norm of $f$ in the proof of Theorem \ref{t:other-upper}, and by propagating decay forward from the initial data in Theorem \ref{t:upper}.

\begin{proof}[Proof of Theorem \ref{t:other-upper}]

First, for small values of time, the solution $f$ is bounded in $L^\infty$ by some value depending only on the initial data. This can be seen, for example, by applying the existence/uniqueness theorem of \cite{HST2019rough}: for some $T_*>0$ depending only on $\|\vv^5 f_{\rm in}\|_{L^\infty(\R^6)}$, one has $\|f(t)\|_{L^\infty(\R^6)} \leq \|\vv^5 f(t)\|_{L^\infty(\R^6)} \leq 2 \|\vv^5 f_{\rm in}\|_{L^\infty(\R^6)} = :L_*$ whenever $t\leq T_*$.

Next, let $z_0 = (t_0,x_0,v_0)\in [0,T]\times\R^6$ be chosen so that
\[
f(t_0,x_0,v_0) \geq \frac 1 2 \|f\|_{L^\infty([0,T]\times\R^6)}.
\]
We may assume  $t_0 > T_*$, since otherwise, $f$ is bounded by $L_*$ in all of $[0,T]\times\R^6$ and there is nothing left to show.

Applying Lemma \ref{l:Qr-est-scaling} at this $z_0$, we obtain
\[
f(t_0,x_0,v_0) \leq \langle v_0 \rangle^{-19\gamma/2}(1+t_0^{-3/4}) L_0^{1-\delta/2} \|f\|_{L^\infty_{t,x} L^{p+\delta}_v(Q_r(z_0))}^{(p+\delta)/2}.
\]
Since $t_0> T_*$, and $T_*$ depends only on the initial data, we absorb $(1+t_0^{-3/4})$ into the implied constant. Next, we absorb the factor $\langle v_0 \rangle^{-19\gamma/2}$ into the norm of $f$:
\[
\begin{split}
f(t_0,x_0,v_0) &\lesssim L_0^{1-\delta/2} \|\vv^{-19\gamma/(p+\delta)} f\|_{L^\infty_{t,x} L^{p+\delta}_v(Q_r(z_0))}^{(p+\delta)/2}\\
&\lesssim L_0^{1-\delta/2},
\end{split}
\]
with implied constant depending on the weighted $L^{p+\delta}_{-19\gamma/(p+\delta)}$ bound for $f$. Since $(t_0,x_0,v_0)$ was chosen so that $f(t_0,x_0,v_0) \geq L_0/2$, this implies $L_0^{\delta/2}$ is bounded above by a constant depending only on $\delta$, the initial data, and the $L^\infty_{t,x} L^1_v$ and $L^\infty_{t.x} (L^p_{-19\gamma/(p+\delta)})_v$ norms of $f$. This concludes the proof.
\end{proof}

\section{Gaussian bounds}\label{s:gaussian}

This section establishes Gaussian decay estimates in $v$ for the solution $f$. These estimates will be needed to remove the velocity weights from the conditional assumption on $f$ for positive times, in our upper bounds. 

\begin{proposition}\label{p:gaussian}
Let $f:[0,T]\times\R^6 \to [0,\infty)$ be a solution of the Landau equation satisfying \eqref{e:cond}, and assume the initial data satisfies
\[
f_{\rm in}(x,v) \leq C_0 e^{-\mu' |v|^2},
\]
for some $C_0, \mu'>0$, as well as the lower bounds
\[
f_{\rm in}(x,v) \geq \ell, \quad x\in \R^3, v\in B_\rho(0).
\]
 for some $\ell, \rho>0$. Assume further that $f$ is bounded and that
 \[
 \int_{\R^3}|v|^2 f(t,x,v) \dd v \leq E_0, \quad (t,x)\in [0,T]\times\R^3.
  \]
Then there exist $c_0>0$ and $C_1>1$, depending on $\gamma$, $C_0$, $\ell$, $\rho$, and the constants in \eqref{e:cond},  such that for 
\[
K = 2\max\{ C_0, C_1\|f\|_{L^\infty([0,T]\times\R^6)}\}
\]
and
\begin{equation}\label{e:mu-def}
\mu = \min\left\{\frac {\mu'} 2 , \frac {c_0} {E_0}, \frac {1} {33\log(K/c_0)}\right\},
\end{equation}
 the upper bound 
\[
f(t,x,v) \leq K e^{-\mu |v|^2},
\]
holds for all $(t,x,v) \in [0,T]\times\R^6$. 
\end{proposition}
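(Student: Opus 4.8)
The plan is to run a barrier argument with the Gaussian barrier $h(v) = Ke^{-\mu|v|^2}$, but to exploit the inequality $f \le h$ itself when estimating the Landau coefficients at a first crossing point, as indicated in the introduction. First I would set up the comparison: the initial data satisfies $f_{\rm in} \le C_0 e^{-\mu'|v|^2} \le Ke^{-\mu|v|^2}$ since $\mu \le \mu'/2 \le \mu'$ and $K \ge 2C_0$; so $f < h$ at $t=0$ (with room to spare). Since $f$ is bounded, say $f \le \|f\|_{L^\infty} =: N$, and $K \ge 2C_1 N$, for $|v|$ large enough $h(v) > N \ge f$ automatically, so any crossing of $f$ and $h$ must occur in a bounded ball $|v| \le R_0$ with $R_0 \approx \sqrt{\mu^{-1}\log(K/N)}$. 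I would then suppose for contradiction that $f$ touches $h$ from below at some first time/place $(t_*, x_*, v_*)$, meaning $f(t_*,x_*,v_*) = h(v_*)$, $f \le h$ on $[0,t_*]$, and $f(t,x,\cdot) \le h$ for $t < t_*$. At such a point the transport part $\partial_t f + v\cdot\nabla_x f \ge 0$ (since $f - h$ has a max there and $h$ doesn't depend on $t, x$), while $\nabla_v f = \nabla_v h$ and $D_v^2 f \le D_v^2 h$.

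Next, the core computation: at the crossing point, using the nondivergence form $Q(f,f) = \tr(\bar a^f D_v^2 f) + \bar c^f f$, I get
\[
0 \le \partial_t f + v\cdot\nabla_x f = Q(f,f) \le \tr(\bar a^f D_v^2 h) + \bar c^f h,
\]
because $D_v^2 f \le D_v^2 h$ and $\bar a^f \ge 0$. Now $D_v^2 h = h\,(4\mu^2 v\otimes v - 2\mu I)$, so
\[
\tr(\bar a^f D_v^2 h) + \bar c^f h = h\left[ 4\mu^2\, v\cdot(\bar a^f v) - 2\mu\,\tr \bar a^f + \bar c^f\right].
\]
The key point is to estimate $v_*\cdot(\bar a^f v_*)$, $\tr\bar a^f$, and $\bar c^f$ at $(t_*,x_*,v_*)$ using \emph{both} the hypotheses \eqref{e:cond} (to get $|\bar a^f| \le C$, which already bounds $\tr\bar a^f$, and to bound $\bar c^f$), and crucially the bound $f(t_*,x_*,\cdot) \le h$, which when plugged into the convolution defining $\bar a^f$ (namely $\bar a^f = a_\gamma \int |w|^{\gamma+2}\Pi(w) f(v-w)\dd w$) gives extra decay: $v_*\cdot(\bar a^f v_*) = a_\gamma \int |w|^{\gamma+2}|v_* \wedge w|^2|w|^{-2} f(v_*-w)\dd w$, and since $|v_*\wedge w| \le |v_*||w|$ but more importantly the integrand vanishes to second order as $w \parallel v_*$, one shows $v_*\cdot(\bar a^f v_*) \lesssim E_0 (1 + |v_*|^2)$ or better — in fact the standard bound is $v\cdot(\bar a^f v) \lesssim (\int |v|^2 f)\cdot$(something), so that $\mu^2 v_*\cdot(\bar a^f v_*) \lesssim \mu^2 E_0 |v_*|^2 \lesssim \mu \cdot (\mu E_0) |v_*|^2$. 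Choosing $\mu \le c_0/E_0$ makes this term $\le \frac{1}{2}\mu|v_*|^2 \cdot(\text{const})$, small compared to... actually one wants the bracket to be strictly negative. The term $-2\mu\tr\bar a^f$ is $\approx -\mu$ (order one times $\mu$, but could be degenerate; use the lower ellipticity from Lemma \ref{l:a-lower}, $\tr\bar a^f \ge e\cdot(\bar a^f e) \ge c_a(1+|v_*|)^\gamma$, which for bounded $|v_*|$ is $\ge c_a(1+R_0)^\gamma$). Meanwhile $\bar c^f \lesssim \|f\|_{L^\infty}^{1-p(\gamma+3)/3}$ by Lemma \ref{l:coeffs}, which is bounded but grows with $N = \|f\|_{L^\infty}$; this is balanced against the gain from $-2\mu\tr\bar a^f$ by choosing $\mu \le (33\log(K/c_0))^{-1}$, since on the crossing ball $|v_*|^2 \lesssim \mu^{-1}\log(K/N) \lesssim \mu^{-1}\log(K/c_0)$ makes $(1+|v_*|)^\gamma$ not too small; working out the exact power and the factor $33$ is where the precise form of \eqref{e:mu-def} comes from. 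Putting the three terms together with these choices of $\mu$ forces the bracket to be $< 0$, contradicting $0 \le h\cdot[\cdots]$, hence no crossing occurs and $f \le h$ on all of $[0,T]\times\R^6$.

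There is one technical wrinkle I would need to handle carefully: the crossing point argument requires the supremum of $f - h$ (or of $f/h$) over $[0,T]\times\R^6$ to actually be attained, which is not automatic on an unbounded domain. The clean fix is to first observe, as above, that $f - h < 0$ outside a fixed ball $|v| \le R_0$ (using $f \le N \le K/2 \le h$ there), so the relevant sup is over a compact $v$-region; for the $x$ variable, since $f$ is a classical solution on $[0,T]\times\R^6$ one typically assumes enough decay/regularity, or one works on $[0,T-\eta]$ and uses continuity in time, taking a first crossing time $t_* = \inf\{t : \sup_{x,v}(f-h) \ge 0\}$ — standard for these barrier arguments. I expect \textbf{the main obstacle} to be the bookkeeping in the coefficient estimates: getting the dependence of the bound on $v_*\cdot(\bar a^f v_*)$ on $E_0$ (via the $L^1$-with-$|v|^2$-weight bound on $f$, using $f \le h$ only where it helps, i.e. combining the second-moment bound $E_0$ with the quadratic vanishing of the projection $\Pi(w)$ in the direction of $v_*$) sharp enough that the resulting restriction on $\mu$ is exactly of the form \eqref{e:mu-def} with a clean logarithmic dependence $\mu \gtrsim 1/\log(K/c_0)$, and simultaneously tracking that $\bar c^f$'s growth in $\|f\|_{L^\infty}$ is absorbed. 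The three competing smallness requirements on $\mu$ ($\mu \le \mu'/2$ for the initial data, $\mu \le c_0/E_0$ for the $v\otimes v$ term, $\mu \le (33\log(K/c_0))^{-1}$ for the balance between the $-\mu\tr\bar a^f$ gain and $\bar c^f$) are precisely the three entries of the min in \eqref{e:mu-def}, so the proof structure is dictated by that formula.
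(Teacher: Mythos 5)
The high-level plan is right—a barrier argument with $h(v) = Ke^{-\mu|v|^2}$ that re-uses the inequality $f\le h$ inside the coefficient estimates—but there are three concrete gaps, the first of which is a real error.

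\textbf{Crossing geometry is backwards.} You assert that ``for $|v|$ large enough $h(v) > N$ automatically,'' and later that ``$f-h<0$ outside a fixed ball.'' But $h$ is a Gaussian that \emph{decays} in $v$: $h(v)\ge K/2 \ge N$ only for $|v|\le\sqrt{\log 2/\mu}$, i.e.\ \emph{inside} a ball. Outside that ball $h\to 0$, so $f>h$ is a priori possible for arbitrarily large $|v|$, and a first crossing point may not be attained at finite $|v|$. The paper's fix is not a soft compactness observation: it approximates $f$ by solutions $f^R$ with compactly supported (hence super-Gaussian) initial data on a torus $\mathbb T_R^3$, propagates rapid decay qualitatively via \cite[Theorem 3.4]{henderson2017smoothing}, and passes to the limit using the existence/uniqueness/smoothing theory of \cite{HST2019rough,henderson2017smoothing}. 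You would need some version of this reduction; the ``clean fix'' you describe does not work.

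\textbf{The lower ellipticity bound you invoke is too weak.} You bound $\tr\bar a^f\ge c_a(1+|v_*|)^\gamma$ using the isotropic part of Lemma~\ref{l:a-lower}, so your negative gain is $\sim -\mu(1+|v_*|)^\gamma$. But the positive $v\otimes v$ contribution scales (after the near/far split and the $E_0$ bound) like $\mu^2 E_0(1+|v_*|)^{\gamma+2}$, which has two extra powers of $|v_*|$. Since the crossing necessarily occurs at $|v_*|\gtrsim \mu^{-1/2}$ and can in principle be at arbitrarily large $|v_*|$, no uniform choice $\mu\le c_0/E_0$ closes the balance with the isotropic bound. The paper instead uses the \emph{anisotropic} part of Lemma~\ref{l:a-lower}: since $\tr\bar a^f\ge \tr(\bar a^f\Pi(v))\ge 2c_a(1+|v|)^{\gamma+2}$, the negative term also scales like $|v_*|^{\gamma+2}$, matching the positive term order by order. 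Recovering this requires the decomposition $D_v^2 h = 2\mu h\bigl((2\mu-|v|^{-2})v\otimes v - \Pi(v)\bigr)$, or equivalently replacing $\tr\bar a^f$ by its perpendicular part; your isotropic bound is a genuine loss.

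\textbf{The $\bar c^f$ estimate needs the barrier, not Lemma~\ref{l:coeffs}.} You propose to control $\bar c^f$ by the flat bound $\lesssim \|f\|_{L^\infty}^{1-p(\gamma+3)/3}$ from Lemma~\ref{l:coeffs}. That quantity is independent of $|v_*|$, while the negative gain $c_1\mu|v_*|^{\gamma+2}$ decays as $|v_*|\to\infty$ (since $\gamma+2\le 0$), so the comparison fails at large $|v_*|$. The paper instead bounds $\bar c^f(t_0,x_0,v_0)$ by splitting the convolution at $|w|=|v_0|/2$, using $f\le h$ on the near part and $M_0$ on the far part, which produces the $|v_0|^\gamma$-decaying bound $\lesssim |v_0|^\gamma(\tfrac{c_1\mu}{2}|v_0|^2 + M_0)$; a Coulomb/non-Coulomb case split and the choice $C_1 = e^{2M_0/c_1}$ are also needed to rule out crossings at moderate $|v_0|$. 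You correctly flag ``bookkeeping'' as the risk, but this is more than bookkeeping—the $f\le h$ trick has to be applied to $\bar c^f$ as well as to $\bar a^f$, and it interacts with the anisotropic ellipticity estimate in a way the sketch doesn't capture.
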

\begin{proof}

First, let us reduce to the case where $f$ is periodic in the $x$ variable and decays rapidly in $v$ (in a qualitative sense). These properties will be needed when we find a first crossing point in our barrier argument. For large $R>0$, let $\zeta_R:\R^3\to [0,1]$ be a smooth function that equals 1 in $B_{R/2}(0)$ and 0 outside $B_{R}(0)$. Let $\mathbb T_R^3\supset B_R(0)$ be the torus of side length $2R$, and let $f^R$ be the solution to the Landau equation \eqref{e:landau} with initial data
\[
f_{\rm in}^R(x,v) = \zeta_R(x) \zeta(v) f_{\rm in}(x,v), \quad (x,v) \in \mathbb T^3_R \times \R^3.
\]
By the existence theorem \cite[Theorem 1.2]{HST2019rough} these solutions exist on $[0,T_0]\times\mathbb T_R^3\times\R^3$ for some $T_0\leq T$ depending only on $\|\vv^5 f_{\rm in}^R\|_{L^\infty(\R^6)}$, which is bounded independently of $R$. Furthermore, fixing any $t_1>0$ and any bounded domain $\Omega \subset [t_1,T_0]\times\R^6$, the lower bounds of Lemma \ref{l:lower-bound} and the smoothing estimates of \cite[Theorem 1.2]{henderson2017smoothing} imply the family $\{f^R\}_{R\geq R_0}$ is precompact in $C^k(\Omega)$ for any $k$, for some $R_0>0$ depending on $\Omega$. Therefore, a sequence $R_j\to \infty$ of $f^R$ (extended by periodicity in $x$) converges locally uniformly on $[0,T_0]\times\R^6$ to a limit, which has initial data $f_{\rm in}$ and therefore must equal $f$ by the uniqueness theorem \cite[Theorem 1.4]{HST2019rough}.

The initial data $f_{\rm in}^R$ is compactly supported, so it satisfies Gaussian decay in $v$ for any rate $\mu>0$. By \cite[Theorem 3.4]{henderson2017smoothing}, these upper bounds are propagated to the time interval $[0,T_0]$: 
\[
f(t,x,v) \leq K_{\mu,R,T_0} e^{-\mu|v|^2},
\]
for some constant $K_{\mu,R,T_0}>0$. This decay estimate will be used to obtain a first crossing point, but it will not be used quantitatively. 

It will suffice to prove the conclusion of this proposition for $f^R$, with constants independent of $R$, up to time $T_0$. Indeed, the upper bound $f^R(t,x,v) \leq K e^{-\mu |v|^2}$ and lower bounds of Lemma \ref{l:lower-bound} imply the solution can be extended to a larger time interval by re-applying the existence theorem \cite[Theorem 1.2]{HST2019rough}, and this argument can be repeated until $f^R$ exists on the same time interval as $f$. The conclusion of the lemma can then be transferred to $f$ by taking the pointwise limit of $f^R$. For simplicity, we assume $T_0=T$ and omit the dependence on $R$ for the rest of the proof.

Throughout this proof, we use the shorthand $\|f\|_{L^\infty} = \|f\|_{L^\infty([0,T]\times\R^6)}$. 

Define the barrier
\[
h(v) = K e^{-\mu|v|^2},
\]
with $K$ and $\mu$ as in the statement of the lemma. By construction, the inequality $f<h$ holds at $t=0$. If $f< h$ does not hold in all of $[0,T]\times\R^6$, we claim there is a point $(t_0,x_0,v_0)$ with $t_0>0$ where $f=h$ for the first time. As discussed at the beginning of this proof, we can assume $f$ is spatially periodic and decays faster than any Gaussian, so continuity in time guarantees the existence of such a point.

%

At the crossing point, since $h$ is constant in $t$ and $x$, we have
\[
\partial_t f \geq 0, \quad \nabla_x f = 0, \quad D^2_v f \leq D_v^2 h,
\]
as well as 
\begin{equation}\label{e:less-than-barrier}
f(t_0,x_0,v) \leq h(v), \quad v\in \R^3.
\end{equation}
From the equation, we then have
\begin{equation}\label{e:crossing-eqn}
0 \leq (\partial_t + v\cdot\nabla_x) f = \tr(\bar a^f D_v^2 f) + f^2 \leq \tr(\bar a^f D_v^2 h) +\bar c^f h,
\end{equation}
at $(t_0,x_0,v_0)$, by the positive-definiteness of $\bar a^f$. 

Our goal is to show the right side of \eqref{e:crossing-eqn} is negative. We begin by bounding the term $\tr(\bar a^f D_v^2 h)$ from above by a negative quantity. To to this, we would like to use the anisotropic upper and lower bounds for the quadratic form $e\mapsto e\cdot (\bar a^f e)$ given by Lemma \ref{l:a-lower}, so we write $D^2 h(v)$ as a sum of two terms, the first acting on vectors parallel to $v$, and the second acting on vectors perpendicular to $v$. By direct calculation,
\[
D_v^2 h(v) = 2\mu h(2\mu v\otimes v - I) = 2\mu h( (2\mu - |v|^{-2}) v\otimes v - (I - |v|^{-2} v\otimes v)).
\]
Using the positive definiteness of $\bar a^f$, we then have
\begin{equation}\label{e:d2term}
\begin{split}
\tr(\bar a^f D_v^2 h) &=2\mu h\left[ (2\mu - |v|^{-2})\tr(\bar a^f v\otimes v) - \tr(\bar a^f(I - v\otimes v/|v|^2))\right]\\
&\leq 2\mu h\left[ 2\mu\, \tr(\bar a^f v\otimes v) - \tr(\bar a^f(I - v\otimes v/|v|^2))\right]
\end{split}
\end{equation}
 By direct calculation (see, e.g. \cite[Lemma 2.1]{cameron2017landau})
\[
\tr\left[\Pi(v-z) (v\otimes v)\right] = \tr\left[\left(I - \frac{(v-z)\otimes (v-z)}{|v-z|^2} \right)(v\otimes v)\right] = |v|^2 |z|^2\sin^2 \theta_{z,v} |v-z|^{-2},
\]
where $\theta_{v,z}$ is the angle between $v$ and $z$. Therefore,
\[
\begin{split}
\tr (\bar a^f v\otimes v) 
&= 
 a_\gamma \int_{\R^3} \Pi(v-z) |v-z|^{\gamma+2} f(z) \dd z\\
 &= a_\gamma |v|^2 \int_{\R^3} |z|^2 \sin^2 \theta_{v,z} |v-z|^\gamma f(z) \dd z.
\end{split}
\]
To bound this integral (evaluated at $v=v_0$) from above, when $z$ is close to $v_0$, we use the Gaussian upper bound $f\leq h$ as well as the bound on the mass of $f$. In more detail, let $r = |v_0|/2$. When $z\in B_r(v_0)$, one has~$|v_0|\approx |z|$, $\sin\theta_{v_0,z} \leq |v_0-z|/|v_0|$, and $f(z) \leq h(z) \leq e^{-|v_0|^2/4}$, which implies
\[
\begin{split}
|v|^2\int_{B_r(v_0)}|z|^2 &\sin^2\theta_{v_0,z}  |v_0-z|^{\gamma} f(z) \dd z\\
 &\leq K^{1/2} e^{-\mu |v_0|^2/8} |v|^2 \int_{B_r(v_0)}  |v_0-z|^{\gamma+2} f(z)^{1/2} \dd z  \\
&\leq K^{1/2} e^{-\mu|v_0|^2/8} |v_0|^2 \left(\int_{B_r(v_0)} |v_0-z|^{2(\gamma+2)} \dd z\right)^{1/2} \left(\int_{B_r(v_0)} f(z)\dd z\right)^{1/2}\\
&\lesssim K^{1/2} e^{-\mu |v_0|^2/8}|v_0|^2 r^{\gamma+7/2} \|f\|_{L^1}^{1/2}\\
&\lesssim K^{1/2} e^{-\mu|v_0|^2/8} |v_0|^{\gamma+11/2}\|f\|_{L^1}^{1/2}.
\end{split}
\]
Outside of $B_r(v_0)$, we use the energy bound, $\sin^2\theta_{v_0,z} \leq 1$, and $|v_0-z|\geq |v_0|/2$:
\[
|v_0|^2 \int_{\R^3\setminus B_r(v_0)} |z|^2 \sin^2\theta_{v_0,z} |v_0-z|^{\gamma} f(z) \dd z \lesssim  |v_0|^{\gamma+2} E_0 .
\]
For the last term on the right in \eqref{e:d2term}, we use the lower bounds of Lemma \ref{l:a-lower}. Overall, we have
\begin{equation}\label{e:traf}
\begin{split}
\tr(\bar a^f D_v^2 h)  &\lesssim  \mu h \left[ 2\mu  \left(\sqrt K e^{-\mu |v_0|^2/8} |v_0|^{\gamma+11/2} + E_0 |v_0|^{\gamma+2}\right)  - c_a  |v_0|^{\gamma+2}\right]\\
&\leq \mu h |v_0|^{\gamma+2} \left[ 2\mu \sqrt K e^{-\mu |v_0|^2/8} |v_0|^{7/2}+2\mu E_0 - c_a \right].
\end{split}
\end{equation}
Using the inequality $\sup_{x\geq 0} x^m e^{-\mu x^2} \lesssim \mu^{-m/2}$, followed by the general inequality \eqref{e:calc}, we have
\[
\begin{split}
2\mu \sqrt K e^{-\mu |v_0|^2/8} |v_0|^{7/2} &= 2\mu \sqrt K e^{-\mu |v_0|^2/16} |v_0|^{7/2} e^{-\mu|v_0|^2/16}\\
 &\lesssim \mu^{-3/4} \sqrt K  e^{-\mu |v_0|^2/16} \\
 & \lesssim \mu^{-7/4} e^{-1/(64\mu)} \sqrt K |v_0|^{-1}.
\end{split}
\]
From the definition \eqref{e:mu-def} of $\mu$, we obtain
\[
\mu \leq \frac 1 {33 \log(K/c_0)} < \frac 1 {65\log(\sqrt K/c_0)}.
\]
This implies $\mu^{5/4} e^{1/(64\mu)} \gtrsim e^{1/(65\mu)} \geq \sqrt K/c_0$, and if $|v_0|$ is large enough, we have
\[
|v_0| \geq \sqrt{\frac{\log 2}\mu} \gtrsim \frac{\sqrt K}{c_0 \mu^{7/4} e^{1/(64\mu)}},
\]
and therefore,
\[
2\mu \sqrt K e^{-\mu |v_0|^2/8} |v_0|^{7/2} \lesssim c_0 \leq \frac {c_a} 3,
\]
if $c_0$ in \eqref{e:mu-def} is chosen sufficiently small. Together with $\mu \leq c_a/(3E_0)$, this implies the right-hand side of \eqref{e:traf} is bounded by 
\[
\lesssim -c_a \mu h |v_0|^{\gamma+2},
\]
as desired.

Our assumption that $|v_0|\geq \sqrt{\log 2/\mu}$ is justified because otherwise, we would have, since $C_1 > 1$,
\[
h(v_0) = K e^{-\mu |v_0|^2} > \frac 1 2 K \geq \|f\|_{L^\infty} \geq f(t_0,x_0,v_0),
\]
 a contradiction.

Returning to \eqref{e:crossing-eqn}, we have shown
 \[
 0 \leq \left( - c_1 \mu |v_0|^{\gamma+2} + \bar c^f\right) h(v_0),
 \]
 for a constant $c_1>0$ proportional to $c_a$. To bound this right-hand side,  we consider the Coulomb ($\gamma = -3$) and non-Coulomb cases separately. 

In the Coulomb case, we have 
\begin{equation}\label{e:contra}
0 \leq \left(-c_1 \mu |v_0|^{-1} + f(t_0,x_0,v_0)\right)h(v_0) . 
\end{equation}
The following inequality for $\mu,s>0$ is easy to prove using calculus:
\begin{equation}\label{e:calc}
s e^{-\mu s^2} \leq \frac 1 {2\mu} e^{-1/(4\mu)}.
\end{equation}
Therefore, we have
\begin{equation*}
f(t_0,x_0,v_0) \leq K e^{-\mu |v_0|^2} \leq  \frac K {2\mu} e^{-1/(4\mu)} |v_0|^{-1}.
\end{equation*}
The function $\mu\mapsto \mu^2e^{1/(8\mu)}$ is uniformly bounded below by a positive constant $c$ on $(0,\infty)$, so if $c_0$ is chosen sufficiently small, our definition \eqref{e:mu-def} of $\mu$ implies
\begin{equation}\label{e:step2}
K \leq c_0 e^{1/(8\mu)} < c c_1 e^{1/(8\mu)} < c_1\mu^2 e^{1/(4\mu)},
\end{equation}
so that 
\begin{equation}\label{e:step3}
f(t_0,x_0,v_0) < c_1 \mu |v_0|^{-1},
\end{equation}
and the right-hand side of \eqref{e:contra} is negative, implying a contradiction. 

In the non-Coulomb case, instead of \eqref{e:contra}, we have
\begin{equation}\label{e:contra2}
0 \leq \left(-c_1 \mu |v_0 |^{\gamma+2} + \bar c^f(t_0,x_0,v_0)\right) h(v_0).
\end{equation}
To estimate the integral defining $\bar c^f$, we use \eqref{e:less-than-barrier} when $w$ is small, and the mass density bound when $w$ is large:
\[
\begin{split}
\bar c^f(t_0,x_0,v_0) &\lesssim \int_{B_{|v_0|/2}} |w|^\gamma K e^{-\mu|v_0-w|^2} \dd w  + \int_{\R^3\setminus B_{|v_0|/2}} |w|^\gamma f(t_0,x_0,v_0-w) \dd w\\
&\lesssim K e^{-\mu |v_0|^2/4} |v_0|^{\gamma+3} + M_0 |v_0|^\gamma.
\end{split}
\]
Using \eqref{e:calc}, we then have
\[
\begin{split}
\bar c^f(t_0,x_0,v_0) &\lesssim |v_0|^\gamma  \left( K e^{-\mu |v_0|^2/4} |v_0|^3 + M_0\right)\\
& \lesssim |v_0|^\gamma \left( \frac {2K} \mu e^{-1/\mu} |v_0|^2  +M_0 \right)\\
&\lesssim |v_0|^\gamma \left( \frac {c_1 \mu} 2 |v_0|^2 + M_0\right),
\end{split}
\]
where in the last line, we used a similar method to \eqref{e:step2} and \eqref{e:step3}. 
The expression inside parentheses is stricly less than $c_1 \mu |v_0|^2$ so long as 
\[
|v_0| > \sqrt{\frac{2M_0}{c_1 \mu}},
\]
which would imply the right side of \eqref{e:contra2} is negative, a contradiction.
On the other hand if $|v_0| \leq \sqrt{2M_0/(c_1\mu)}$, then we have
\[
h(v_0) = K e^{-\mu |v_0|^2} \geq K e^{-2M_0/c_1},
\]
and we choose $C_1= e^{2M_0/c_1}$ in the definition of $K$, so that this quantity is strictly greater than $\|f\|_{L^\infty}$, which means a crossing cannot occur in this case either.

We conclude $f< h$ on $[0,T]\times\R^6$, as desired. 
\end{proof}
%
%
%
%
%
%
%
%
%
%
%
%
%
%

Next, we use these  Gaussian bounds to improve the conditional dependence of our global upper bounds:

\begin{theorem}\label{t:inf}
Let $f:[0,T]\times\R^6\to [0,\infty)$ be a solution to the Landau equation \eqref{e:landau} satisfying the hypotheses of Proposition \ref{p:gaussian}. 
Then 
\[
\|f\|_{L^\infty([0,T]\times\R^6)}\leq C E_0^{-19\gamma/\delta},
\]
for a constant $C$ depending only on $\gamma$, $\delta$, the quantities in \eqref{e:cond}, and the constants $\ell$, $\rho$, $C_0$, and $\mu'$ corresponding to the initial data. 
\end{theorem}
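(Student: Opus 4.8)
\textbf{Proof strategy for Theorem \ref{t:inf}.}
The plan is to combine the local bound of Proposition \ref{p:inf} with the parabolic scaling of the Landau equation so that the $L^\infty$ norm of $f$ appears on both sides with a favorable power, and then to close the estimate using the Gaussian decay of Proposition \ref{p:gaussian} to handle the large-velocity regime where the ellipticity constant $\lambda$ degenerates. Write $L := \|f\|_{L^\infty([0,T]\times\R^6)}$, which is finite by assumption, and fix an arbitrary base point $z_0 = (t_0,x_0,v_0)$ with $t_0 \geq T/2$, say. The goal is a pointwise bound of the form $f(z_0) \lesssim L^\beta$ with $\beta < 1$ (times powers of $E_0$ and the data constants); taking the supremum over $z_0$ and absorbing $L^\beta$ then yields the theorem. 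Points with small $t_0$ are controlled directly by the Gaussian upper bound with a constant depending only on the data, so it suffices to treat $t_0$ bounded below.

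\textbf{Step 1: rescale.}
For $r>0$ and the scaling exponent $\alpha$ to be chosen, set $f_r(t,x,v) = r^{\alpha+3+\gamma} f(r^\alpha t, r^{1+\alpha} x, r v)$, recentered at $z_0$ so that the relevant unit cylinder $Q_1$ for $f_r$ corresponds to a small kinetic cylinder around $z_0$ for $f$; $f_r$ again solves \eqref{e:landau}. I would choose $\alpha$ (this is where the value $\alpha$ gets pinned down, presumably $\alpha$ chosen so the coefficient bounds scale cleanly) and then choose the scale $r = r(L)$ as a negative power of $L$, small when $L$ is large, so that on $Q_1$ the rescaled solution is $O(1)$ in $L^\infty$ and — crucially — the rescaled coefficient bounds from Lemma \ref{l:coeffs} become $O(1)$: recall $\|\bar b^f\|_\infty \lesssim L^{1-p(\gamma+4)/3}$ and $\|\bar c^f\|_\infty \lesssim L^{1-p(\gamma+3)/3}$ with $p = 3/(5+\gamma)$, and these exponents are exactly what forces the choice of $r$. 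One also needs the lower ellipticity bound: by Lemma \ref{l:a-lower} together with the pointwise lower bound of Lemma \ref{l:lower-bound}, $\bar a^f$ is elliptic with constant $\lambda \approx (1+|v_0|)^\gamma$ near $v_0$, and after rescaling this becomes $\lambda_r \approx r^{?}(1+|v_0|)^\gamma$; the degeneration in $|v_0|$ is the reason the Gaussian bound is needed.

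\textbf{Step 2: apply Proposition \ref{p:inf} and unwind.}
Applying the local estimate to $f_r$ on $Q_1$ gives $\|f_r\|_{L^\infty(Q_{1/2})} \lesssim \lambda_r^{-2}(1+\|\bar a^{f_r}\|_\infty^2)(1+\|\bar a^{f_r}\|_\infty^2 + \|\bar b^{f_r}\|_\infty^2 + \|\bar c^{f_r}\|_\infty^2)^{19/4}\|f_r\|_{L^2(Q_1)}$, and I would bound $\|f_r\|_{L^2(Q_1)} \leq \|f_r\|_{L^\infty}^{1/2}\|f_r\|_{L^1}^{1/2}$, controlling $\|f_r\|_{L^1}$ by the scaled mass bound $M_0$. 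Translating back through the definition of $f_r$, the factor $\|f_r\|_{L^\infty}^{1/2}$ produces the sub-linear power $L^{1/2}$ (up to the contributions of the $\bar b, \bar c$ powers, which is why $r$ must be tuned to keep those bounded), while the $\lambda_r^{-2}$ factor contributes a power of $(1+|v_0|)^{-2\gamma}$ — a polynomial growth in $|v_0|$. So one obtains $f(z_0) \lesssim (1+|v_0|)^{N} L^{\beta} \cdot (\text{powers of } M_0)$ for some explicit $N>0$ and $\beta<1$.

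\textbf{Step 3: kill the $|v_0|$ growth with the Gaussian bound, then close.}
The polynomial factor $(1+|v_0|)^N$ is fatal if we simply take the sup, so instead I would interpolate: from Proposition \ref{p:gaussian}, $f(z_0) \leq K e^{-\mu|v_0|^2}$ with $K \approx \max\{C_0, C_1 L\}$ and $\mu$ as in \eqref{e:mu-def} — in particular $\mu \gtrsim 1/\log K \gtrsim 1/\log L$, and $\mu \gtrsim c_0/E_0$. For $|v_0|$ large (say $|v_0| \geq R_\ast$ with $R_\ast$ to be chosen), use the Gaussian bound, which decays faster than any polynomial and in particular beats $(1+|v_0|)^N L^\beta$ once $|v_0|^2 \gtrsim (\log L)^2$, i.e. once $|v_0| \gtrsim \log L$; for $|v_0| \leq R_\ast \approx \log L$, the Step 2 bound gives $f(z_0) \lesssim (\log L)^N L^\beta (\text{powers of }M_0)$, and since $(\log L)^N L^\beta = o(L)$ as $L\to\infty$, this can be absorbed: $L \lesssim (\log L)^N L^\beta \cdot(\ldots)$ forces $L \leq C$ with $C$ depending only on the allowed data. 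Tracking the dependence of $\mu$ on $E_0$ (through $\mu \gtrsim c_0/E_0$) and of $r, \lambda_r$ on $E_0$ through the crossover radius $R_\ast$ produces the stated power $E_0^{-19\gamma/\delta}$; pinning down this exact exponent is the bookkeeping heart of the argument.

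\textbf{Main obstacle.}
The delicate point is Step 1–2: choosing $\alpha$ and $r = r(L)$ so that \emph{simultaneously} the rescaled $L^\infty$ contributes only $L^{1/2}$, the rescaled $\bar b^{f_r}, \bar c^{f_r}$ stay bounded (this uses $p = 3/(5+\gamma)$ in an essential, borderline way — a worse integrability exponent would break the sub-linearity), and the $|v_0|$-dependence of $\lambda_r^{-2}$ is only polynomial so that the exponentially-decaying Gaussian bound can swallow it. Getting the exponent on $E_0$ right — i.e. honestly propagating how $\mu \approx c_0/E_0$ and the $19/4$ power in Proposition \ref{p:inf} interact through the crossover $|v_0| \approx \log L$ — is the other place where care is required, but it is routine once the scaling is set up correctly.
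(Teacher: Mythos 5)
Your overall roadmap — rescale by $r = r(L)$, apply Proposition~\ref{p:inf}, unwind, and use the Gaussian decay at large $|v_0|$ to absorb the polynomial blow-up of the ellipticity ratio — is the same strategy as the paper's proof, and the choice $r\sim L^{-p/3}$ forced by the $\bar b^{f_r},\bar c^{f_r}$ bounds from Lemma~\ref{l:coeffs} is exactly right. But Step~2 as written has a genuine gap: interpolating $\|f_r\|_{L^2}\leq\|f_r\|_{L^\infty}^{1/2}\|f_r\|_{L^1}^{1/2}$ and controlling $\|f_r\|_{L^1}$ by the mass bound does \emph{not} give a sub-linear power of $L$. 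After setting $r=L^{-p/3}$ so that $\|f_r\|_{L^\infty(Q_1)}\leq r^{5+\gamma}L\leq 1$ and the rescaled $\bar b,\bar c$ are $O(1)$, the $L^1$ norm scales like $\|f_r\|_{L^\infty_{t,x}L^1_v}\lesssim r^{2+\gamma}M_0$, and unwinding the $L^2$ norm costs an extra factor $r^{-3/2}=L^{p/2}$. Your bookkeeping, ``the factor $\|f_r\|_{L^\infty}^{1/2}$ produces the sub-linear power $L^{1/2}$,'' drops this $r^{-3/2}$; the honest tally gives $f(z_0)\lesssim (1+|v_0|)^{-19\gamma/2}L^{(p+1)/2}M_0^{1/2}$ and $(p+1)/2\geq 1$ for all $\gamma\in[-3,-2]$ since $p=3/(5+\gamma)\geq 1$. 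So this route never achieves $\beta<1$ and the absorption fails.

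The paper closes this by interpolating between $L^\infty$ and $L^{p+\delta}$ instead: $\|f\|_{L^\infty_{t,x}L^2_v}\leq\|f\|_{L^\infty}^{(2-p-\delta)/2}\|f\|_{L^\infty_{t,x}L^{p+\delta}_v}^{(p+\delta)/2}\lesssim L^{(2-p-\delta)/2}$, so the total exponent is $p/2 + (2-p-\delta)/2 = 1-\delta/2 < 1$, with the margin coming precisely from the extra $\delta>0$ in the integrability hypothesis \eqref{e:cond}. Your proposal never uses the $L^{p+\delta}_v$ hypothesis at all — a strong signal that the argument cannot close, since that is the key assumption of the theorem. Once this is repaired, your Step~3 crossover idea is in the right spirit, though the paper is more efficient: it splits at $|v_0|=2$, applies the Gaussian bound to the $L^\infty$ factor in the interpolation for $|v_0|>2$, and absorbs $(1+|v_0|)^{-19\gamma/2}$ via $\sup_{x\geq 0}x^m e^{-\mu x^2}\lesssim\mu^{-m/2}$; this is where the explicit $\mu^{19\gamma/4}$, and hence $E_0^{-19\gamma/4}$ from $\mu\gtrsim 1/(E_0\log L)$, enter. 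One further small point: the small-$t_0$ case cannot be handled by the Gaussian upper bound alone, since $K$ in Proposition~\ref{p:gaussian} itself depends on $\|f\|_{L^\infty}$; the paper instead invokes the local existence theory of \cite{HST2019rough} to bound $f$ on $[0,T_*]$ by a constant depending only on $f_{\rm in}$.
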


Recall that $E_0$ is the bound for the energy density, as in the hypotheses of Proposition \ref{p:gaussian}.

\begin{proof}

As in the proof of Theorem \ref{t:other-upper}, we notice that $f$ is bounded on a small time interval $[0,T_*]$ by a constant depending only on the initial data, as a result of existence/uniqueness theory. 
Therefore, we may take a point $z_0 = (t_0,x_0,v_0)$ with $t_0 \geq T_*$, such that 
\[
f(t_0,x_0,v_0) \geq \frac 1 2 \|f\|_{L^\infty([0,T]\times \R^6)} = \frac 1 2 L_0.
\]
Applying Lemma \ref{l:Qr-est-scaling} and using $t_0 \geq T_*$, we have
\begin{equation}\label{e:decay-v0-eqn}
\begin{split}
f(t_0,x_0,v_0) &\lesssim \langle v_0 \rangle^{-19\gamma/2} L_0^{p/2} \|f\|_{L^\infty(Q_r(z_0))}^{(2-p-\delta)/2} \|f\|_{L^\infty_{t,x} L^{p+\delta}_v(Q_r(z_0))}^{(p+\delta)/2}\\
& \lesssim \langle v_0\rangle^{-19\gamma/2}L_0^{p/2} \|f\|_{L^\infty(Q_r(z_0))}^{(2-p-\delta)/2},
\end{split}
\end{equation}
after absorbing the norm $\|f\|_{L^\infty_{t,x} L^{p+\delta}_v}$ into the implied constant.

If $|v_0|\leq 2$, then 
since $f(t_0,x_0,v_0) \geq L_0/2$, this implies $L_0^{\delta/2}$ is bounded above by a constant depending only on $\delta$, the initial data, and the $L^\infty_{t,x} L^1_v$ and $L^\infty_{t.x} L^p_v$ norms of $f$. Therefore, we may assume $|v_0|>2$ for the remainder of the proof.

 In this case, to obtain an upper bound that is independent of $|v_0|$, we need to use the Gaussian decay of $f$. 
 Applying the Gaussian upper bound from Proposition \ref{p:gaussian} to \eqref{e:decay-v0-eqn}, we obtain, since $|v|^2 \geq |v_0|^2/4$ in $Q_r(z_0)$,
\[
f(t_0,x_0,v_0) 
\lesssim |v_0|^{-19\gamma/2} L_0^{p/2}\left( e^{-\mu |v_0|^2/4} \right)^{(2-p-\delta)/2} 
\lesssim |v_0|^{-19\gamma/2} L_0^{(2 - \delta)/2} e^{-(2-p-\delta)\mu|v_0|^2/8}.
\]
where we used that $K\lesssim L_0$, where $K$ is the constant from Proposition \ref{p:gaussian}. Using the inequality $x^m e^{-\mu x^2}\lesssim \mu^{-m/2}$, we then have
\[
f(t_0,x_0,v_0) \lesssim L_0^{(2-\delta)/2} \mu^{19\gamma/4}.
\]
Recalling the definition of $\mu$ in Proposition \ref{p:gaussian}, we may assume $\mu < \mu'/2$ since otherwise, $\mu$ is independent of $E_0$ and $L_0$, and the current proof is easier. We then have
\[
\mu = \min\left\{\frac {c_0}{E_0} , \frac 1 {33\log(K/c_0)}\right\} \gtrsim \frac 1 {E_0 \log(K)},
\]
since we can assume $E_0, K \gtrsim 1$. Similarly, we may assume $K \lesssim L_0$ since the other case, where $K$ is determined only by the initial data, is simpler. We then have
\[
\mu \gtrsim \frac 1 {E_0 \log(L_0)} \gtrsim \frac{L_0^{-\delta/(19\gamma)}} {E_0},  
\]
which yields, since $\gamma <0$, 
\[
f(t_0,x_0,v_0) \lesssim L_0^{(2-\delta)/2} E_0^{-19\gamma/4} L_0^{\delta/4} = L_0^{1-\delta/4} E_0^{-19\gamma/4}.
\]
By the choice of $(t_0,x_0,v_0)$, this implies
\[
L_0 \leq C E_0^{-19\gamma/\delta},
\]
for a constant $C>0$ depending only on $\delta$, the initial data, and the constants in \eqref{e:cond}. 
\end{proof}

\section{Bound for the energy density}\label{s:energy}

In this last section, we show that the upper bound on the energy density in the above estimates can be replaced by a bound on the $s$ moment for some small $s>0$. 

\begin{proposition}\label{p:E}
Let $f$ be a solution of the Landau equation on $[0,T]\times\R^6$ satisfying the hypotheses of Theorem \ref{t:upper}. Let 
\[
E_0 = \sup_{t,x} \int_{\R^3} |v|^2 f(t,x,v)\dd v,
\]
and let $s\in (0,2)$ be arbitrary. 

Then $E_0$ is bounded above by a constant depending only on $\gamma$, $\delta$, $s$, the initial data, the $L^\infty_{t,x} L^1_v$ and $L^\infty_{t,x}L^{p+\delta}_v$ norms of $f$, and
\[
\sup_{t,x} \int_{\R^3} |v|^s f(t,x,v) \dd v.
\]
\end{proposition}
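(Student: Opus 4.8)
The plan is to run a self-improving (bootstrap) estimate for the energy density: bound $\int_{|v|\le R}|v|^2f\dd v$ by the assumed $s$-moment and bound the tail $\int_{|v|>R}|v|^2f\dd v$ using the pointwise Gaussian bound of Proposition~\ref{p:gaussian}, then optimize over the cutoff $R$. The first preliminary point is that $E_0<\infty$ at all: since $f_{\rm in}\le C_0e^{-\mu'|v|^2}$ and $f$ is a classical solution on $[0,T]\times\R^6$, decay in $v$ is propagated (qualitatively, with possibly crude constants — this is \cite[Theorem 3.4]{henderson2017smoothing}, after the localization/periodization reduction used in the proof of Proposition~\ref{p:gaussian}), so $f$ is bounded and $\int_{\R^3}|v|^2f(t,x,v)\dd v$ is finite uniformly in $(t,x)$. (Alternatively one phrases the estimate below on a subinterval $[0,\tau]$ and runs a continuity-in-time argument; the inequality we derive applies verbatim there.)

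With $E_0<\infty$ known, the hypotheses of Theorem~\ref{t:inf} and of Proposition~\ref{p:gaussian} are met, so $\|f\|_{L^\infty([0,T]\times\R^6)}\le C E_0^{-19\gamma/\delta}$, and $f(t,x,v)\le K e^{-\mu|v|^2}$ with $K\approx\|f\|_{L^\infty}$ and, exactly as extracted in the proof of Theorem~\ref{t:inf}, $\mu\gtrsim (E_0\log E_0)^{-1}$ (or $\mu\gtrsim1$ in the regime $\mu=\mu'/2$, which only makes things easier); here the implied constants depend only on the allowed quantities. The two features we will use are that $\log K\lesssim\log E_0$ and that $1/\mu\lesssim E_0\log E_0$, i.e. the Gaussian rate degrades at most like $E_0$ up to logarithms.

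Now for $R>0$, split $\int_{\R^3}|v|^2f\dd v=\int_{|v|\le R}|v|^2f\dd v+\int_{|v|>R}|v|^2f\dd v$. The first piece is $\le R^{2-s}\int_{\R^3}|v|^sf\dd v\le R^{2-s}S_0$. For the tail, $f\le Ke^{-\mu|v|^2}$ and the elementary bound $\int_{|v|>R}|v|^2e^{-\mu|v|^2}\dd v\lesssim R^4\mu^{-1/2}e^{-\mu R^2/2}$ (valid once $R^2\ge4/\mu$) give a tail bounded by $\lesssim KR^4\mu^{-1/2}e^{-\mu R^2/2}$. Choosing $R^2=\tfrac{A}{\mu}\log\!\big(2+\tfrac K\mu\big)$ for a large absolute constant $A$ makes the tail $\lesssim1$, and with the inputs of the previous paragraph this choice satisfies $R^2\lesssim E_0(\log E_0)^2$. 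Hence
\[
E_0\ \lesssim\ R^{2-s}S_0+1\ \lesssim\ E_0^{(2-s)/2}(\log E_0)^{2-s}S_0+1 .
\]
Since $s>0$, the exponent $(2-s)/2<1$: either $E_0$ is already below a fixed constant, or $E_0^{s/2}\lesssim(\log E_0)^{2-s}S_0$, and since $E\mapsto E^{s/2}/(\log E)^{2-s}\to\infty$ as $E\to\infty$, this forces $E_0\le C$ with $C$ depending only on $\gamma,\delta,s$, the initial data, and the $L^\infty_{t,x}L^1_v$, $L^\infty_{t,x}L^{p+\delta}_v$ and $s$-moment bounds on $f$, which is the claim.

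The main obstacle is the apparent circularity: both Theorem~\ref{t:inf} and Proposition~\ref{p:gaussian} take a finite energy bound as an input, yet the energy is exactly what we are bounding. What closes the loop is that these inputs are \emph{quantitatively mild}: Theorem~\ref{t:inf} makes $\|f\|_{L^\infty}$ (hence $K$) at most a fixed power of $E_0$, so $\log K=O(\log E_0)$, and the Gaussian rate obeys $1/\mu=O(E_0\log E_0)$; therefore the optimal cutoff is $R=O(\sqrt{E_0}\log E_0)$ and the truncated $s$-moment returns only the sublinear power $E_0^{(2-s)/2}$ (times logs), which is absorbed. The only other delicate point is making the qualitative step $E_0<\infty$ rigorous — either through a clean statement of decay propagation on all of $\R^3_x$, or through the continuity-in-time formulation — while everything else is routine calculus.
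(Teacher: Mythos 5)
Your proposal is correct, and it reaches the conclusion by a genuinely different route in the final step. You and the paper use exactly the same quantitative inputs: Theorem \ref{t:inf} gives $K\lesssim E_0^{\theta}$ with $\theta=-19\gamma/\delta$, and the definition \eqref{e:mu-def} gives $\mu\gtrsim E_0^{-1}$ up to logarithms (the paper in fact gets $\mu\gtrsim E_0^{-1}$ outright, since $\log(K/c_0)\approx\log E_0\lesssim E_0$; your extra $\log E_0$ is harmless). Where you diverge is in how the energy is interpolated against the $s$-moment and the Gaussian. The paper applies H\"older with a large exponent $q$, writing $|v|^2f=\bigl(|v|^{s/q'}f^{1/q'}\bigr)\bigl(|v|^{2-s/q'}f^{1/q}\bigr)$ and choosing $q=\tfrac2s(2\theta+s+3)$ so that the powers of $K$ and $\mu^{-1}$ picked up by the Gaussian factor sum to $E_0^{s/4}$, yielding the clean self-improving inequality $E_0\lesssim E_0^{1-s/4}S_0^{1/q'}$ and hence an explicit power bound $E_0\lesssim S_0^{4/(sq')}$. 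You instead truncate at radius $R$, bound the core by $R^{2-s}S_0$ and the tail by the Gaussian, and optimize $R\sim\sqrt{\mu^{-1}\log(2+K/\mu)}$; this is more elementary (no exponent-tuning) and still closes because $R^2\lesssim E_0(\log E_0)^2$ gives the sublinear power $E_0^{(2-s)/2}$ times logarithms, at the cost of a less explicit final constant. Your preliminary discussion of the apparent circularity (one must know $E_0<\infty$ before dividing through) is a legitimate point that the paper handles only implicitly, via the hypothesis in Proposition \ref{p:gaussian} that $f$ is bounded with finite energy density and via the periodization/decay-propagation reduction; your two suggested fixes (qualitative propagation of Gaussian decay, or a continuity-in-time formulation) are both adequate. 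One small bookkeeping remark: your tail estimate needs $\mu R^2\gtrsim1$, which your choice of $R$ guarantees once the absolute constant $A$ exceeds $4/\log 2$, so nothing is missing there.
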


\begin{proof}
Combining the $L^\infty$ bound of Theorem \ref{t:inf} with the Gaussian decay estimate of Proposition \ref{p:gaussian}, we obtain
\begin{equation}\label{e:good-gaussian}
f(t,x,v) \leq K e^{-\mu |v|^2}.
\end{equation}
As in the proof of Theorem \ref{t:inf}, we can assume $K\lesssim \|f\|_{L^\infty([0,T]\times\R^6)} \leq C E_0^{-19\gamma/\delta}$, since otherwise $K$ is independent of $E_0$, and the proof becomes simpler. Similarly, for $\mu$ we may assume
\[
\mu = \min\left\{\frac {c_0} {E_0}, \frac {1} {33\log(K/c_0)}\right\}  \gtrsim \frac {1} {E_0},
\]
since $\log(K/c_0) \approx \log(E_0) \lesssim E_0$. 
Here, the implied constants depend only on the quantities in the statement of the lemma.

Let $\theta =-19\gamma/\delta$. For any $s\in (0,2)$ and $q>1$, estimate \eqref{e:good-gaussian} and H\"older's inequality imply, with $q' = q/(q-1)$,
\[
\begin{split}
\int_{\R^3} |v|^2 f(t,x,v) \dd v &\leq K^{1/q} \int_{\R^3} |v|^{s/q'} |v|^{2-s/q'} f(t,x,v)^{q'} e^{-\mu|v|^2/q} \dd v\\
&\leq K^{1/q} \left(\int_{\R^3} |v|^{s} f(t,x,v) \dd v\right)^{1/q'} \left( \int_{\R^3} e^{-\mu|v|^2} |v|^{2q - s (q-1)} \dd v\right)^{1/q}\\
&\lesssim E_0^{\theta/q} \|f\|_{L^\infty_{t,x}(L^1_{s})_v}^{1/q'} \mu^{-1+s/2 - (s+3)/(2q) } C_{q,s}.
\end{split}
\]
Here, we use the notation $\|f\|_{L^\infty_{t,x}(L^1_s)_v} = \sup_{t,x}\int_{\R^3} |v|^s f(t,x,v)\dd v$, as well as 
\[
C_{q,s} = \left(\int_{\R^3} e^{-|w|^2} |w|^{2q - s(q-1)} \dd w\right)^{1/q}, 
\]
which depends only on $s$ and $q$. 
With 
$\mu \gtrsim E_0^{-1}$, 
we have
\[
\int_{\R^3} |v|^2 f(t,x,v) \dd v \lesssim C_{q,s} E_0^{\theta/q + 1 - s/2 +(s+3)/(2q)} \|f\|_{L^\infty_{t,x}(L^1_s)_v}^{1/q'},
\]
so we choose
\[
q = \frac 2 s \left( 2\theta + s + 3\right),
\]
so that $\theta/q+ (s+3)/(2q) = s/4$, and the exponent of $E_0$ becomes $1 - s/4$.   Taking the supremum over $t$ and $x$, we have
\[
E_0 \lesssim  E_0^{1 - s/4} \|f\|_{L^\infty_{t,x}(L^1_s)_v}^{1/q'},
\]
or 
\[
E_0 \lesssim \|f\|_{L^\infty_{t,x}(L^1_s)_v}^{4/(s q')}.
\]
\end{proof}

Combining Theorem \ref{t:inf} with Proposition \ref{p:E}, we obtain Theorem \ref{t:upper}.

\bibliographystyle{abbrv}
\bibliography{landau-rev}




\end{document}